\documentclass[a4paper,11pt]{article}

\usepackage[margin=1in]{geometry}
\usepackage{setspace}
\usepackage{amsmath}
\usepackage{amssymb}
\usepackage{amsthm}
\usepackage{graphicx}
\usepackage{float}
\usepackage{caption}
\usepackage{listings}
\usepackage{tikz}
\usepackage{enumitem}

\newtheorem{theorem}{Theorem}[section]
\newtheorem{proposition}[theorem]{Proposition}
\newtheorem{corollary}[theorem]{Corollary}

\newtheorem{lemma}[theorem]{Lemma}
\newtheorem{definition}[theorem]{Definition}

\newtheorem{remark}[theorem]{Remark}

\numberwithin{figure}{section}

\providecommand{\keywords}[1]{\textbf{\textit{Keywords:}} #1}

\usepackage{hyperref}
\hypersetup{
    pdftitle={Hausdorff Dimension of Union of Lines Covering a Curve: Applications to Mathematical Physics},
    pdfauthor={Hanwen Liu}
}

\title{\texorpdfstring{\textbf{Hausdorff Dimension of Union of Lines Covering a Curve: Applications to Mathematical Physics}}{Hausdorff Dimension of Union of Lines Covering a Curve: Applications to Mathematical Physics}}
\author{Hanwen Liu}
\date{}

\begin{document}
\maketitle

\begin{abstract}
We prove that for any nonlinear $f \in C^{1,\alpha}([0,1])$, the union of lines covering its graph has a Hausdorff dimension of at least $1+\alpha$, and this dimension bound is sharp. We then apply these geometric results to mathematical physics, proving that spacetime observability sets for conservation laws with $\alpha$-H\"older initial wave speeds possess a dimension of at least $\alpha$. Finally, we prove that if an absolutely integrable vector field $v$ on the boundary of a polyhedron exhibits a strictly positive total flux, then the union of the line field spanned by $v$ possesses a Hausdorff dimension of 3.
\end{abstract} 

\begin{center}
\keywords{Hausdorff dimension, Kakeya problem, Geometric optics, Conservation law}
\end{center}

\tableofcontents
\onehalfspacing
\raggedbottom

\section{Introduction and Background}

The study of how families of lines intersect, cover, or are constrained by sets in Euclidean space has a rich history in geometric measure theory. This area of research is intimately connected to the classical Kakeya problem, which asks for the minimal dimension of a set containing a unit line segment in every direction \cite{Davies1971, KatzTao2002, KatzZahl2019}. In 1952, R. O. Davies \cite{Davies1952} proved a foundational result regarding covers: any subset $A$ of the plane can be covered by a collection of lines such that the union of the lines has the exact same Lebesgue measure as $A$. 

However, when one shifts the perspective from Lebesgue measure to Hausdorff dimension, the geometry of line coverings becomes highly rigid and dependent on the structural smoothness of the object being covered. A natural variant of this problem asks: how small can the Hausdorff dimension of a set be if it contains a line passing through every point of a given planar curve? Relying on projection theorems, it is a well-known consequence that the dimension of a union of lines is deeply tied to the dimension of its parameterizing set. Recently, Venieri \cite{Venieri2017} proved that if a curve is Lipschitz, any union of transversally intersecting lines covering the curve must have a Hausdorff dimension of at least 2.

\begin{figure}[ht]
\centering
\includegraphics[width=0.75\textwidth]{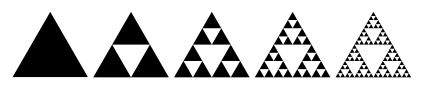}
\caption{The Sierpiński triangle of Hausdorff dimension $\alpha=\log(3)/\log(2)\approx1.585<2$. Throughout this article, we denote the Hausdorff dimension of a subset $E$ of the Euclidean space $\mathbb{R}^d$ by $\dim_\mathcal{H}(E)$.}
\end{figure}

When the covering family is permitted to include tangent lines, a striking dichotomy emerges based on the differentiability of the curve. Cumberbatch et al. recently demonstrated in \cite{Cumberbatch2023}  that if a nonlinear curve is twice-differentiable, the union of its covering lines must still possess a Hausdorff dimension of exactly 2. Astonishingly, they proved that this rigidity breaks down at lower regularities: it is possible to construct a continuously differentiable strictly convex curve that can be perfectly covered by a family of lines of which union has dimension $1$. 

We make six main contributions to this line of inquiry. First, in Section 2, we significantly strengthen a related result by Cumberbatch et al. regarding the tangent lines of differentiable functions. We prove that the union of tangent lines of a nonlinear differentiable function does not merely possess a Hausdorff dimension of 2, but must in fact contain an open disk.

Second, in Section 3, we apply these rigorous geometric constraints to geometric optics. We demonstrate that light rays mathematically tangent to a generic differentiable caustic cannot remain perfectly collimated or converge to a singular focal point, but are instead guaranteed to sweep out a macroscopic two-dimensional illuminated region.

Third, in Section 4, we resolve the mystery of the geometric rigidity in the intermediate spaces between $C^1$ and $C^2$. By studying curves parameterized by functions in the intermediate H\"older space $C^{1,\alpha}([0,1])$, we establish a sharp dimensional bound that perfectly bridges the gap. Our main theorem demonstrates that the minimal Hausdorff dimension of a union of lines covering a $C^{1,\alpha}$ nonlinear curve is exactly $1+\alpha$. 

To achieve this, we rely on the interaction between transversality, Lipschitz mappings, and the following foundational projection theorem by Falconer and Mattila:

\begin{lemma}[Falconer-Mattila, 2016 \cite{Falconer2016}]\label{Falconer}
Let $\Lambda$ be a measurable subset of $\mathbb{R}^2$, and $\pi\colon\mathbb{R}^2\rightarrow\mathbb{R}^1$ the projection onto the first coordinate. If $\dim_\mathcal{H}(\pi(\Lambda))\geq1$, then the Hausdorff dimension of $$\bigcup_{(a,b)\in \Lambda}\{(x,y)\in\mathbb{R}^2\mid y=ax+b\}$$ is exactly equal to $2$.
\end{lemma}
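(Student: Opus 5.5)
Lemma~\ref{Falconer} is a cited result, so I would not re-derive it from first principles. Instead I would reduce it to the standard Kakeya-type principle behind it: a union of lines whose slope set is large has full dimension. The natural tool is point--line duality combined with Marstrand's projection theorem, in the form due to Falconer and Mattila.

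\emph{Step 1 (reduction).} Since $\dim_\mathcal{H}$ is monotone, I may replace $\Lambda$ by a subset $\Lambda'$ with $\dim_\mathcal{H}(\pi(\Lambda'))\geq 1-\varepsilon$ for any fixed $\varepsilon>0$. One way to get such a subset is to apply Frostman's lemma to $\pi(\Lambda)$: this gives a measure $\nu$ on the slopes with $\nu(I)\lesssim |I|^{1-\varepsilon}$. I would then lift $\nu$ measurably to a measure $\mu$ on $\Lambda$, using a measurable selection $a\mapsto b(a)$; this is where measurability of $\Lambda$ is used. It suffices to show that the union $L$ of the lines indexed by $\Lambda'$ has dimension at least $2-\varepsilon$.

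\emph{Step 2 (slicing).} Fix $x$ and consider the vertical slice $L\cap\{x\}\times\mathbb{R}$. This slice contains the set $\{ax+b(a)\}$, which is the image of the graph $\{(a,b(a))\}$ under the linear projection $P_x(a,b)=ax+b$. The graph has dimension at least $\dim \pi(\Lambda')\ge 1-\varepsilon$. As $x$ ranges over $\mathbb{R}$, the maps $P_x$ run through all projections of $\mathbb{R}^2$ onto lines, except one direction. Marstrand's projection theorem, in its energy form (Kaufman's argument with the $(1-\varepsilon)$-dimensional energy of $\mu$), then gives
\[
\dim_\mathcal{H}\bigl(P_x(\operatorname{graph} b)\bigr)\geq 1-\varepsilon
\]
for Lebesgue-almost every $x$.

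\emph{Step 3 (Fubini).} I would conclude with Marstrand's slicing inequality in its reverse form, \emph{a lower bound from slices}. If a set $E\subset\mathbb{R}^2$ has vertical slices $E_x$ with $\dim E_x\ge s$ for a positive-measure set of $x$, then $\dim E\ge 1+s$. One way to see this is to build a Frostman measure on $E$ by integrating the slice measures $\mu\circ P_x^{-1}$ against $dx$ and checking its energy. Applying this to $L$ gives $\dim_\mathcal{H} L\ge 2-\varepsilon$. Letting $\varepsilon\to0$ yields $2$, and the upper bound $2$ is trivial since $L\subset\mathbb{R}^2$.

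\emph{Main obstacle.} The hard part is Step~3 together with the measurability issues. Lower bounds on dimension from slices are false for general sets without a measurable family of measures on the slices. I would therefore work throughout with the single pushforward measure $\mu$ and bound the $(2-2\varepsilon)$-energy of
\[
\int (\mu\circ P_x^{-1})\otimes\delta_x \, dx
\]
directly. This is a Kaufman-type computation in which I integrate $|ax+b-a'x-b'|^{-s}$ over $x$ first. That inner integral is $\lesssim |a-a'|^{-s}$ for $s<1$. The resulting bound reduces the estimate to the Frostman condition on the slopes, which is exactly the hypothesis $\dim\pi(\Lambda)\ge1$.
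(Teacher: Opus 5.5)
The paper does not prove this lemma; it quotes it from Falconer--Mattila. Your Steps 2--3 are nevertheless the mechanism the paper itself uses in proving the $C^{1,\alpha}$ bound of Section 4. There, the vertical slice $E_x=\{ax+b:(a,b)\in\Lambda\}$ is $P_x(\Lambda)$, and Marstrand's projection theorem gives $\dim_\mathcal{H}E_x\ge\min\{1,\dim_\mathcal{H}\Lambda\}$ for a.e.\ $x$. Marstrand's slicing theorem then turns this into a lower bound for $\dim_\mathcal{H}E$. Since $\dim_\mathcal{H}\Lambda\ge\dim_\mathcal{H}\pi(\Lambda)\ge 1$, you can apply the projection theorem to $\Lambda$ directly, so the selection $a\mapsto b(a)$ and the lift of $\nu$ are unnecessary.

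Both routes need $\Lambda$ analytic (e.g.\ Borel, as in the paper's applications), not merely Lebesgue measurable. This applies to your Frostman lemma on $\pi(\Lambda)$, the Jankov--von Neumann selection, and the projection theorem. For example, $\Lambda=S\times\{0\}$ with $S$ a Bernstein set is Lebesgue null. Yet $\pi(\Lambda)=S$ has dimension $1$ and carries no Frostman measure. So your remark that this is ``where measurability of $\Lambda$ is used'' should read ``analyticity''.

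Your ``main obstacle'' paragraph is wrong on two counts.

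First, lower bounds from slices need no measurable family of slice measures. Eilenberg's inequality (Marstrand's slicing theorem), $\int^*\mathcal{H}^{t-1}(E_x)\,dx\le C\,\mathcal{H}^t(E)$, holds for \emph{every} $E\subseteq\mathbb{R}^2$ with the upper integral. Hence for $t>\dim_\mathcal{H}E$ one gets $\dim_\mathcal{H}E_x\le t-1$ for a.e.\ $x$. Combined with Step 2, this gives $\dim_\mathcal{H}E\ge 2-\varepsilon$ at once, which is exactly how the paper argues.

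Second, the energy estimate you sketch has its exponent off by one. For $t=2-2\varepsilon>1$, the $t$-energy of $\sigma=\int\delta_x\otimes P_{x\#}\mu\,dx$ has kernel $|(x-x',\,ax+b-a'x'-b')|^{-t}$, integrated in both $x$ and $x'$. The one-variable integral $\int_0^1|(a-a')x+b-b'|^{-t}\,dx$ you propose diverges whenever the two lines cross over $[0,1]$. Your bound $\lesssim|a-a'|^{-s}$ holds only for $s<1$.

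The fix is to integrate $x'$ out first along the line $y=a'x+b'$. This gives $\lesssim\operatorname{dist}\bigl((x,ax+b),\{y=a'x+b'\}\bigr)^{1-t}$. Only then integrate in $x$, now with exponent $t-1<1$, which yields $\lesssim|a-a'|^{1-t}$ for bounded slopes. Hence $I_t(\sigma)\lesssim I_{t-1}(\nu)=I_{1-2\varepsilon}(\nu)<\infty$. With either correction the argument is complete, and the slicing version is the shorter one.
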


Fourth, in Section 5, we translate our geometric results into the realm of nonlinear partial differential equations, providing a novel dimension bound for the observability of characteristic lines in general first-order scalar conservation laws. 

Fifth, in Section 6, we focus on the measure theory of functions of bounded variation. Utilizing the Lebesgue decomposition theorem and the area formula, we prove that for a nonlinear $C^1$ function $f$ of which derivative is of bounded variation, if the union of some family of lines that cover $\operatorname{graph}(f)$ has Hausdorff dimension less than $2$, then the distributional derivative $f''$ is a singular measure.

Finally, in Section 7, as a corollary of Wang and Zahl's recent work in Kakeya problem, we prove that any absolutely integrable vector field defined on the boundary of a polyhedron that yields a strictly positive total flux forces the corresponding line field to sweep out a set of full Hausdorff dimension.

The appendices manifest attempts to generalize our established results to high dimensions.

\section{Graphs of Differentiable Functions}

We begin by examining the local geometric behavior of tangent families. In Theorem 2.4 of their recent work, Cumberbatch, Keleti, and Zhang \cite{Cumberbatch2023} proved that the union of the tangent lines of a differentiable function with a non-constant derivative has a Hausdorff dimension of exactly 2. 

In this section, we show that this geometric structure is far more rigid. We establish that the union of these tangent lines cannot be a highly porous or fractal set of dimension 2; rather, the nonlinearity of the curve forces the tangent lines to sweep out positive area, guaranteeing that their union contains an open disk, or equivalently, a set with non-empty interior. We first establish a crucial connectivity property.

\begin{lemma}\label{differentiable1}
    Let $f\colon [0, 1]  \rightarrow \mathbb{R}$ be a differentiable function. For every $0<a\leq1$ the following properties hold:
    \begin{enumerate}
        \item[1.] $ \{f(x)+f^{\prime}(x)(a-x) \mid 0 \leq x \leq a \}$ is a connected subset of $\mathbb{R}$;
        \item[2.] $ \{f(x)+f^{\prime}(x)(a-x) \mid 0 \leq x \leq a \}$ is a singleton if and only if $f^{\prime}([0, a])$ is.
    \end{enumerate}
\end{lemma}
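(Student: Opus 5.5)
The plan is to study the auxiliary function $g\colon[0,a]\to\mathbb{R}$, $g(x)=f(x)+f'(x)(a-x)$, whose image is exactly the set in the statement. Geometrically, $g(x)$ is the height at which the tangent line at $x$ crosses the vertical line through $a$.

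For part 1, $g$ need not be continuous, since $f'$ need not be. The point is instead to show that $g$ is itself a derivative, and then use Darboux's theorem.
\begin{itemize}
\item Since $f$ is differentiable, it is continuous, so $F(x)=\int_0^x f(t)\,dt$ is differentiable with $F'=f$.
\item The product $f(x)(a-x)$ is differentiable, with derivative $f'(x)(a-x)-f(x)$.
\item Hence $G(x)=f(x)(a-x)+2F(x)$ is differentiable on $[0,a]$ and satisfies
\[
G'(x)=f'(x)(a-x)-f(x)+2f(x)=g(x).
\]
\item By Darboux's theorem, the derivative of a differentiable function on an interval has the intermediate value property. So $g([0,a])$ is an interval, and in particular it is connected.
\end{itemize}
The only thing to watch here is that one-sided derivatives at the endpoints are allowed; Darboux's theorem holds in that setting too.

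For part 2, the direction "$\Leftarrow$" is a direct computation. If $f'\equiv c$ on $[0,a]$, then $f(x)=f(0)+cx$ by the mean value theorem. Substituting,
\[
g(x)=f(0)+cx+c(a-x)=f(0)+ca,
\]
which is a single value.

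For "$\Rightarrow$", suppose $g\equiv C$ on $[0,a]$, so $f(x)+f'(x)(a-x)=C$ for every $x$. This is the main step: we must recover linearity of $f$ from a first-order relation, without assuming $f'$ is continuous.
\begin{itemize}
\item On $[0,a)$, set $h(x)=\dfrac{C-f(x)}{a-x}$. It is differentiable, with
\[
h'(x)=\frac{-f'(x)(a-x)+(C-f(x))}{(a-x)^2}.
\]
\item The numerator vanishes by hypothesis, so $h'\equiv 0$. By the mean value theorem, $h\equiv k$ is constant on $[0,a)$.
\item Therefore $f(x)=C-k(a-x)$ on $[0,a)$, and so $f'(x)=k$ for all $x\in[0,a)$.
\item By continuity of $f$, we get $f(a)=C$. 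The left difference quotients at $a$ all equal $k$, so $f'(a)=k$ as well.
\end{itemize}
Thus $f'([0,a])=\{k\}$ is a singleton.

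The only obstacle I expect is in part 1: finding the antiderivative $G$ that exhibits $g$ as a derivative, since continuity-based arguments are unavailable. Once that is in hand, the rest is routine.
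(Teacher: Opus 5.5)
Your proof is correct and follows the paper's argument: the same antiderivative $f(x)(a-x)+2\int_0^x f(t)\,dt$ combined with Darboux's theorem for part 1, and the same direct substitution for the converse in part 2. The only difference is in the forward direction of part 2. There the paper simply says to solve $(a-x)y'+y-b=0$ by separation of variables. Your quotient $h(x)=(C-f(x))/(a-x)$ with $h'\equiv 0$ is the rigorous integrating-factor version of that step: it avoids dividing by $y-b$, does not need $f'$ to be continuous, and handles the singular endpoint $x=a$ explicitly.
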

\begin{proof}
    Define the differentiable function $F\colon [0, a]  \rightarrow \mathbb{R}$ by $$F(x):=f(x)(a-x)+2 \displaystyle\int_0^x f(t) d t$$ then $$F^{\prime}(x)=f^{\prime}(x)(a-x)-f(x)+2 f(x)=f(x)+f^{\prime}(x)(a-x)$$ for all $x \in[0, a]$. By Darboux's theorem, we have that $F^{\prime}([0, a])$ is connected.

    Assume that $F^{\prime}$ is constant. Then there exists $b \in \mathbb{R}$ such that $$f(x)+f^{\prime}(x)(a-x) \equiv b$$ for all $x \in[ 0, a]$. Solving the first order linear ordinary differential equation $$(a-x) y^{\prime}+y-b=0$$ by separation of variables, we obtain the general solution $y=k(x-a)+b$ where $k \in \mathbb{R}$ is a constant of integration. Therefore $f^{\prime}(x) \equiv k$ for all $x \in[0, a]$.

    Conversely, assume that there exist $k, c \in \mathbb{R}$ such that $f(x)=k x+c$ for all $x \in[0, a]$ then $$f(x)+f^{\prime}(x)(a-x)=k x+c+k(a-x)\equiv a k+c$$ for all $x \in[0, a]$.
\end{proof}

Building upon this connectivity, we can now prove that the union of tangent lines generates a subset of the plane with non-empty interior.

\begin{lemma}\label{differentiable2}
    Let $f\colon [0, 1] \longrightarrow \mathbb{R}$ be a nonlinear differentiable function. Then the interior of the subset $$ \{ (y, f(x)+f^{\prime}(x)(y-x)) \mid 0 \leq x \leq y \leq 1 \}$$ of $\mathbb{R}^2$ is nonempty.
\end{lemma}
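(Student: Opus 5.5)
The plan is to study $S:=\{(y, f(x)+f'(x)(y-x)) \mid 0\le x\le y\le 1\}$ through its vertical slices and to use Lemma \ref{differentiable1} to see that each slice is an interval. For fixed $a\in(0,1]$, the slice of $S$ over $y=a$ is exactly
$$S_a=\{f(x)+f'(x)(a-x)\mid 0\le x\le a\}.$$
By part 1 of Lemma \ref{differentiable1}, $S_a$ is connected, hence an interval. It then suffices to find an open interval $J$ of values of $a$ and a fixed open interval $I$ such that $I\subset S_a$ for every $a\in J$. Then $J\times I\subset S$, which gives nonempty interior.

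First I obtain two base points strictly below $1$ with different slopes. Since $f$ is nonlinear, $f'$ is not constant on $[0,1]$. I claim $f'$ is already nonconstant on $[0,1)$. Otherwise $f$ is affine on $[0,1)$, so by continuity it is affine on $[0,1]$, and then $f'(1)$, computed as a one-sided difference quotient, equals the same slope, so $f$ would be linear, a contradiction. Thus there exist $x_1,x_2\in[0,1)$ with $f'(x_1)\neq f'(x_2)$. Set $a_0:=\max(x_1,x_2)<1$.

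Next I compare the two tangent lines. For $a\in[a_0,1]$ define the affine functions
$$L_i(a):=f(x_i)+f'(x_i)(a-x_i),\qquad i=1,2.$$
Both values lie in $S_a$ because $x_i\le a_0\le a$. Since the slopes differ, $L_1$ and $L_2$ agree at no more than one point. Hence I can pick $a^*\in(a_0,1)$ with $L_1(a^*)\neq L_2(a^*)$; without loss of generality $L_1(a^*)<L_2(a^*)$. Let $m$ be their midpoint and $4\delta:=L_2(a^*)-L_1(a^*)$. By continuity of $L_1,L_2$ there is an open interval $J\ni a^*$, contained in $(a_0,1)$, on which $L_1(a)<m-\delta$ and $L_2(a)>m+\delta$.

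For each $a\in J$, the interval $S_a$ contains both $L_1(a)$ and $L_2(a)$, so it contains $[L_1(a),L_2(a)]\supset(m-\delta,m+\delta)$. Therefore $J\times(m-\delta,m+\delta)\subset S$, and $S$ has nonempty interior. The only delicate point is making sure the base points can be chosen with $a_0<1$, so that there is room to vary $a$; this is exactly the endpoint argument in the first step. Everything else follows from the connectedness in Lemma \ref{differentiable1} together with the continuity of two fixed affine functions.
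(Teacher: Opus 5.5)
Your proof is correct and follows essentially the same route as the paper: find two tangent lines with base points in $[0,1)$ that stay strictly separated over an interval of abscissae, then use the connectedness of the vertical slices from part 1 of Lemma \ref{differentiable1} to fill in a rectangle. The only difference is how you get the separated pair. The paper applies part 2 of Lemma \ref{differentiable1} at the abscissa $1$ and perturbs to $[1-\delta,1]$, whereas you take two distinct slopes on $[0,1)$ and use that non-parallel lines cross at most once; this avoids part 2 (the ODE argument) and spells out the endpoint detail $x_1,x_2<1$ that the paper leaves implicit.
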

\begin{proof}
    Since $f^{\prime}$ is not constant, by Lemma \ref{differentiable1} there exist distinct $0\leq x_1, x_2 <1$ such that $$f (x_1 )+f^{\prime} (x_1 ) (1-x_1 )<f (x_2 )+f^{\prime} (x_2 ) (1-x_2 ).$$ For each $i \in\{1,2\}$, define $L_i\colon \mathbb{R}\rightarrow\mathbb{R}$ by $$L_i(t):=f (x_i )+f^{\prime} (x_i ) (t-x_i )$$ and denote $$\varepsilon:=\dfrac{1}{4} |L_2(1)-L_1(1) |>0.$$ Since $L_1$ and $L_2$ are continuous, there exists $\delta>0$ satisfying $\delta+\max  \{x_1, x_2 \}<1$ such that $$\sum_{i=1}^2 |L_i(t)-L_i(1) |<\varepsilon$$ for all $t \in[1-\delta, 1]$. If we write $$b:=\frac{L_1(1)+L_2(1)}{2}$$ then $L_1(t)<b-\varepsilon$ and $L_2(t)>b+\varepsilon$ for all $t \in[1-\delta, 1]$. Applying Lemma \ref{differentiable1} then yields $$[b-\varepsilon, b+\varepsilon] \subseteq \{f(x)+f^{\prime}(x)(t-x) \mid 0 \leq x \leq t \}$$ for all $t \in[1-\delta, 1]$, and hence $$[1-\delta, 1] \times[b-\varepsilon, b+\varepsilon] \subseteq \{ (y, f(x)+f^{\prime}(x)(y-x) ) \mid 0 \leq x \leq y, 0 \leq y \leq 1 \}$$ as desired.
\end{proof}

\begin{corollary}\label{differentiable3}
    Let $f\colon [0, 1] \longrightarrow \mathbb{R}$ be a nonlinear differentiable function. Then the union of tangent lines of the graph of $f$ contains an open disk, that is, the interior of the subset $$ \bigcup_{a\in[0,1]}\{ (a+t, f(a)+f^{\prime}(a)t) :t\in\mathbb{R} \}$$ of $\mathbb{R}^2$ is nonempty.
\end{corollary}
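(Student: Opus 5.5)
The plan is to deduce the corollary directly from Lemma \ref{differentiable2}, by showing that the set appearing there is contained in the union of tangent lines. Concretely, let
$$S:=\{ (y, f(x)+f^{\prime}(x)(y-x)) \mid 0 \leq x \leq y \leq 1 \}, \qquad U:=\bigcup_{a\in[0,1]}\{ (a+t, f(a)+f^{\prime}(a)t) :t\in\mathbb{R} \}.$$
I will verify $S\subseteq U$ and then use monotonicity of the interior: $\operatorname{int}(S)\subseteq\operatorname{int}(U)$. Since Lemma \ref{differentiable2} gives $\operatorname{int}(S)\neq\emptyset$, this yields $\operatorname{int}(U)\neq\emptyset$. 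Because the interior is open, it then contains an open disk.

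For the inclusion, take any point of $S$, written $(y, f(x)+f^{\prime}(x)(y-x))$ with $0\le x\le y\le 1$. Set $a:=x\in[0,1]$ and $t:=y-x\in\mathbb{R}$. Then $(a+t, f(a)+f^{\prime}(a)t)=(y, f(x)+f^{\prime}(x)(y-x))$, so the point lies on the tangent line at $a$ and hence in $U$.

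Nothing here is a genuine obstacle; all of the substance sits in Lemma \ref{differentiable2}, which we may assume. The only care needed is to note that the hypotheses match: $f$ is nonlinear and differentiable on $[0,1]$, exactly as that lemma requires. It is also worth stating explicitly that the rectangle $[1-\delta,1]\times[b-\varepsilon,b+\varepsilon]$ produced in that lemma's proof has nonempty interior. For example, the open disk centred at $(1-\delta/2,b)$ with radius $\min\{\delta/2,\varepsilon\}$ lies inside it, which gives a concrete disk inside $U$.
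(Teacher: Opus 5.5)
Your proposal is correct and is exactly the paper's argument. The paper simply calls the corollary an immediate consequence of Lemma~\ref{differentiable2}; your substitution $a=x$, $t=y-x$ shows that the set in that lemma lies inside the union of tangent lines, and monotonicity of interiors then finishes, which is precisely the step the paper leaves implicit.
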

\begin{proof}
    This is an immediate consequence of Lemma~\ref{differentiable2}.
\end{proof}

To geometrically visualize the stark contrast established by these results, consider the behavior of tangent lines for linear versus non-linear functions. As illustrated in Figure \ref{tangent}, the tangent lines of a linear function trivially collapse into a single one-dimensional line. Conversely, the tangent lines of a strictly convex function continuously change direction, causing them to fan out and sweep across a two-dimensional region of the plane. This fanning effect guarantees that their union contains an open disk, as proven in Corollary \ref{differentiable3}.

\begin{figure}[H]
  \centering
  \begin{tikzpicture}[x=0.75pt,y=0.75pt,yscale=-1,xscale=1]

\draw [line width=2.25]    (329,249.6) .. controls (349.99,233.86) and (379.51,228.03) .. (404.13,229.66) .. controls (426.44,231.13) and (444.72,238.72) .. (449,250.6) ;
\draw [color={rgb, 255:red, 245; green, 166; blue, 35 }  ,draw opacity=1 ][line width=1.5]    (329,249.6) -- (374.55,217.34) ;
\draw [shift={(377,215.6)}, rotate = 144.69] [color={rgb, 255:red, 245; green, 166; blue, 35 }  ,draw opacity=1 ][line width=1.5]    (14.21,-4.28) .. controls (9.04,-1.82) and (4.3,-0.39) .. (0,0) .. controls (4.3,0.39) and (9.04,1.82) .. (14.21,4.28)   ;
\draw [color={rgb, 255:red, 245; green, 166; blue, 35 }  ,draw opacity=1 ][line width=1.5]    (368,232.6) -- (420.12,217.44) ;
\draw [shift={(423,216.6)}, rotate = 163.78] [color={rgb, 255:red, 245; green, 166; blue, 35 }  ,draw opacity=1 ][line width=1.5]    (14.21,-4.28) .. controls (9.04,-1.82) and (4.3,-0.39) .. (0,0) .. controls (4.3,0.39) and (9.04,1.82) .. (14.21,4.28)   ;
\draw [color={rgb, 255:red, 245; green, 166; blue, 35 }  ,draw opacity=1 ][line width=1.5]    (419,230.6) -- (462.13,243.73) ;
\draw [shift={(465,244.6)}, rotate = 196.93] [color={rgb, 255:red, 245; green, 166; blue, 35 }  ,draw opacity=1 ][line width=1.5]    (14.21,-4.28) .. controls (9.04,-1.82) and (4.3,-0.39) .. (0,0) .. controls (4.3,0.39) and (9.04,1.82) .. (14.21,4.28)   ;
\draw [line width=2.25]    (129,252) -- (248,221.6) ;
\draw [color={rgb, 255:red, 245; green, 166; blue, 35 }  ,draw opacity=1 ][line width=1.5]    (129,252) -- (176.09,240.32) ;
\draw [shift={(179,239.6)}, rotate = 166.07] [color={rgb, 255:red, 245; green, 166; blue, 35 }  ,draw opacity=1 ][line width=1.5]    (14.21,-4.28) .. controls (9.04,-1.82) and (4.3,-0.39) .. (0,0) .. controls (4.3,0.39) and (9.04,1.82) .. (14.21,4.28)   ;
\draw [color={rgb, 255:red, 245; green, 166; blue, 35 }  ,draw opacity=1 ][line width=1.5]    (154,245.8) -- (207.09,232.34) ;
\draw [shift={(210,231.6)}, rotate = 165.77] [color={rgb, 255:red, 245; green, 166; blue, 35 }  ,draw opacity=1 ][line width=1.5]    (14.21,-4.28) .. controls (9.04,-1.82) and (4.3,-0.39) .. (0,0) .. controls (4.3,0.39) and (9.04,1.82) .. (14.21,4.28)   ;
\draw [color={rgb, 255:red, 245; green, 166; blue, 35 }  ,draw opacity=1 ][line width=1.5]    (188.5,236.8) -- (245.58,221.98) -- (271.1,215.36) ;
\draw [shift={(274,214.6)}, rotate = 165.45] [color={rgb, 255:red, 245; green, 166; blue, 35 }  ,draw opacity=1 ][line width=1.5]    (14.21,-4.28) .. controls (9.04,-1.82) and (4.3,-0.39) .. (0,0) .. controls (4.3,0.39) and (9.04,1.82) .. (14.21,4.28)   ;
  \end{tikzpicture}
  \caption{An example in $\mathbb{R}^2$: The linear function on the left of which union of tangent lines has Hausdorff dimension 1, comparing with the strictly convex function on the right of which union of tangent lines has Hausdorff dimension 2.}
  \label{tangent}
\end{figure}
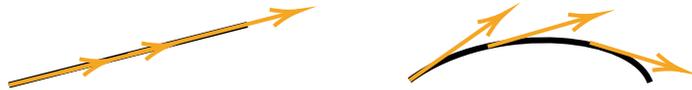

\section{Applications to Geometric Optics}

The geometric measure theory results established in the previous section possess profound and far-reaching implications for the fundamental behavior of light in geometric optics and the study of optical aberrations. The intrinsic connection lies in the physical propagation of light rays, which naturally translates the dynamics of wave propagation into intricate 1-parameter families of straight lines spanning the physical domain. By interpreting our geometric constraints in this context, we can derive the existence of illuminated regions.

\begin{figure}[ht]
\centering
\includegraphics[width=0.5\textwidth]{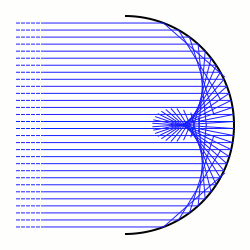}
\caption{Geometric visualization of ray intersection forming a caustic. A nonlinear wavefront set forces emanating normal lines to overlap, creating a possibly singular 1D envelope.}
\label{fig:caustic}
\end{figure}

In geometric optics, light waves propagate along straight rays that are strictly orthogonal to the propagating wavefront. By classical differential geometry from the 18th century, the normal lines of a given curve are exactly the tangent lines to its evolute. In an optical system, this evolute is precisely the caustic.

Formally applying our earlier results to characteristic families of light rays of a geometric optical system, we arrive at the following theorem:

\begin{theorem}
If the caustic set of a 2D geometric optical system contains a nonlinear differentiable arc, then the light rays of this system must illuminate an open region somewhere in the 2D plane.    
\end{theorem}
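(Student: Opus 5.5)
The plan is to reduce the statement to Corollary~\ref{differentiable3}. This needs two modelling conventions made explicit, and both are exactly what the paragraph before the theorem uses. First, in a 2D geometric optical system the light rays are the normal lines of the wavefront. By the classical evolute--involute relation, these are precisely the tangent lines of the caustic (the evolute) at its smooth points. Second, a point is \emph{illuminated} if some ray passes through it. With these conventions, the illuminated set contains the union of the tangent lines of every arc of the caustic. It is therefore enough to show that the tangent lines along one nonlinear differentiable arc already sweep out a set with nonempty interior.

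The key steps run in this order. First, fix the nonlinear differentiable arc $\Gamma$ contained in the caustic. Since $\Gamma$ is not a line segment, I will pick a point of $\Gamma$ and a coordinate frame (a rotation plus an affine rescaling of the parameter) in which a subarc of $\Gamma$ is the graph of a differentiable function $f\colon[0,1]\to\mathbb{R}$ with $f$ nonlinear. To do this, I take a subarc on which the tangent direction is never vertical in suitable coordinates, so that the arc is locally a graph. If the arc turns so that no single frame works, I shrink to a subarc where it is a graph; nonlinearity survives, because if every graph-subarc were a segment, the connected arc would be a single segment. Second, note that rigid motions and the rescaling $x\mapsto$ affine parameter send tangent lines to tangent lines and preserve the property of having nonempty interior. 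Third, apply Corollary~\ref{differentiable3} to $f$: the union of its tangent lines contains an open disk. Pulling back to the original frame gives an open region covered by rays, so that region is illuminated.

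The main obstacle is the first step: making ``differentiable arc'' precise and justifying the local graph representation. For a differentiable parametrization $\gamma$ with $\gamma'\neq0$, I would use that near a point where $\gamma'$ is not vertical, $x\circ\gamma$ is strictly monotone. This only needs that the derivative of $x\circ\gamma$ is nonzero at that point together with a Darboux-type argument, since the derivative need not be continuous. If the Darboux argument fails to give local monotonicity, I will fall back on reading ``differentiable arc'' as ``graph of a differentiable function after a rigid motion''. That is the natural meaning in the paper's framework, and under it the theorem is literally Corollary~\ref{differentiable3}.
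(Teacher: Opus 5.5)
Your argument is the paper's own. The paper justifies the theorem only by noting that rays are normals to the wavefront, hence tangent lines of its evolute (the caustic), and then formally applying Corollary~\ref{differentiable3}. In doing so it tacitly reads ``nonlinear differentiable arc'' as the graph of a nonlinear differentiable function in suitable coordinates, which is exactly your fallback, so in that sense your proposal is correct and matches the paper.

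One caution about the step you call the main obstacle, since there you go beyond the paper and the mechanism you state is false. The condition $(x\circ\gamma)'(t_0)\neq0$ together with Darboux does \emph{not} make $x\circ\gamma$ monotone near $t_0$. For $\gamma(t)=(t+2t^2\sin(1/t),\,t)$ one has $\gamma'(0)=(1,1)$, yet $(x\circ\gamma)'(t)=1+4t\sin(1/t)-2\cos(1/t)$ takes both signs in every neighbourhood of $0$. Darboux only helps once $\langle\gamma',e\rangle$ is known to be nonzero on a whole interval. Your ``nonlinearity survives'' step inherits the problem, because it presupposes that the graph-subarcs cover the arc, which is exactly what the failed monotonicity claim was supposed to supply.

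If you want the parametrized version, a Baire-category argument repairs it:
\begin{enumerate}
\item Let $K$ be the closed set of parameters having no neighbourhood mapped into a line. $K\neq\varnothing$, since otherwise the direction of $\gamma'$ is locally constant, hence constant, and the arc lies in a line.
\item Take a continuity point $t_0$ of the Baire-one map $\gamma'|_K$ and set $e=\gamma'(t_0)/|\gamma'(t_0)|$. Then $\langle\gamma',e\rangle>0$ on $K\cap J$ for a small interval $J\ni t_0$.
\item Each component of $J\setminus K$ has an endpoint $c\in K\cap J$. That component is mapped into a line through $\gamma(c)$, so on it $\gamma'$ is a nonzero multiple of $\gamma'(c)$. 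Hence $\langle\gamma',e\rangle\neq0$ on all of $J$.
\item Now Darboux makes $\langle\gamma,e\rangle$ strictly monotone on $J$. So $\gamma|_J$ is the graph over the $e$-axis of a differentiable function, nonlinear because $t_0\in K$, and Corollary~\ref{differentiable3} applies.
\end{enumerate}
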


\begin{remark}
Corollary \ref{differentiable3} also applies to Clairaut's equation
$$
y=x\frac{dy}{dx}+f\left(\frac{dy}{dx}\right)
$$
for strictly convex $f\in C([0,1])$. We omit the discussion here.
\end{remark}

\section{H\"older and Sobolev Spaces}

Having established the robust geometric behavior of tangent families, we now arrive at the measure theoretic core of our study. We introduce our main theorem, which determines the absolute lower bound for the Hausdorff dimension of any family of lines covering the graph of a $C^{1,\alpha}$ function. To ensure this result is sufficiently adaptable for physical applications, where phenomena like shocks may occur on sets of measure zero, we frame the theorem such that the lines need only cover the graph over a subset of sufficiently large dimension.

Before we can state and prove our key theorem, we shall review the standard notion of analytic sets, also known as Suslin sets in the descriptive set theory literature, and then carefully recall the famous uniformisation theorem of Jankov and von Neumann.

As per usual, a subset of the Euclidean space $\mathbb{R}^d$ is said to be an analytic set, if it is a continuous image of a Borel subset of $\mathbb{R}^d$. It is well-known from point set topology that analytic sets form a strictly larger class than that of Borel sets. 

Notice that the class of analytic sets is closed under countable unions and intersections, as well as under continuous images or inverse images. Another core property of analytic sets is that they are always Lebesgue measurable. 

For our purposes, the most important hallmark of analytic sets is revealed by the following classical theorem.

\begin{theorem}[Jankov-von Neumann, 1949 \cite{Kechris1995}]\label{Jankov}
Let $A$ be an analytic set in $\mathbb{R}^2$ such that its projection to the $x$-axis is $B$. Then, there is a Lebesgue measurable function $g:B\rightarrow\mathbb{R}$ such that the graph of $g$ is a
subset of $A$.
\end{theorem}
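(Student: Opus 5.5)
The plan is to derive this from the classical Jankov--von Neumann uniformisation theorem for analytic sets, stated on Polish spaces, together with the fact that the resulting uniformising function is measurable with respect to the $\sigma$-algebra generated by analytic sets. Every analytic set is Lebesgue measurable, so that $\sigma$-algebra sits inside the Lebesgue $\sigma$-algebra, and Lebesgue measurability of $g$ follows.

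\emph{Step 1: reduce to a closed set over Baire space.} Write $A = \phi(C)$ with $C\subseteq\mathbb{R}^d$ Borel and $\phi$ continuous. Every Borel subset of a Polish space is the continuous image of a closed subset of the Baire space $\mathcal{N}=\mathbb{N}^{\mathbb{N}}$. Hence there is a closed set $F\subseteq\mathcal{N}$ and a continuous surjection $\psi\colon F\to A$. Let $p\colon\mathbb{R}^2\to\mathbb{R}$ be the first coordinate projection, so that $B = p(A) = (p\circ\psi)(F)$. In particular $B$ is analytic.

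\emph{Step 2: leftmost-branch selection.} For each $x\in B$ the fibre $F_x := (p\circ\psi)^{-1}(x)$ is a nonempty closed subset of $\mathcal{N}$. Such a set has a lexicographically least element $s(x)$, built digit by digit by always choosing the least $n$ for which the cylinder still meets $F_x$. Define $g(x) := \pi_2(\psi(s(x)))$, where $\pi_2$ is the second coordinate. Then $(x,g(x)) = \psi(s(x))\in A$, so the graph of $g$ lies in $A$.

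\emph{Step 3: measurability, the main obstacle.} It suffices to show that $s^{-1}(N_t)$ is in the $\sigma$-algebra generated by analytic sets for every basic cylinder $N_t$, where $t$ is a finite sequence. By the digit-by-digit construction, $s(x)\in N_t$ holds exactly when:
\begin{itemize}
\item $x\in (p\circ\psi)(F\cap N_t)$, and
\item for every $u$ that agrees with $t$ up to some position $k$ and has $u_k<t_k$ at that position, $x\notin (p\circ\psi)(F\cap N_u)$.
\end{itemize}
Each set $(p\circ\psi)(F\cap N_u)$ is a continuous image of a closed set, hence analytic. So $s^{-1}(N_t)$ is a countable Boolean combination of analytic sets, and $s$ is measurable for the $\sigma$-algebra they generate. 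Since $\pi_2\circ\psi$ is continuous, $g$ is measurable for that $\sigma$-algebra as well. Every analytic set is Lebesgue measurable, so $g$ is Lebesgue measurable on $B$, and $B$ is itself measurable.

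The delicate point is Step 3, in particular checking that the description of $s^{-1}(N_t)$ through lexicographic minimality is exact. Alternatively, one can simply cite Kechris 18.1 at this point.
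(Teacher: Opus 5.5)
Your proposal is correct, and it is the standard argument behind the reference the paper cites in place of a proof (Kechris, Theorem 18.1): reduce to a closed subset of Baire space, take the lexicographically least point of each fibre, show that preimages of cylinders lie in the $\sigma$-algebra generated by analytic sets, and finish with Lusin's theorem that analytic sets are Lebesgue measurable. The description of $s^{-1}(N_t)$ that you flag as delicate is exact; in fact only the finitely many $u=(t_0,\dots,t_{k-1},m)$ with $k<|t|$ and $m<t_k$ are needed, so it is even a finite Boolean combination of analytic sets.
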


Now, we are fully prepared to introduce our main theorem concerning Hölder functions in this section:

\begin{theorem}\label{Hölder}
Let $0<\alpha\leq1$, and let $I$ be a subset of $[0,1]$ such that $\dim_\mathcal{H}([0,1] \setminus I) < \alpha$. Let $f\in C^{1,\alpha}([0,1])$ and let $\Lambda$ be a Borel subset of $\mathbb{R}^2$. Suppose that the set $$E:=\bigcup_{(a,b)\in \Lambda}\{(x,y)\in\mathbb{R}^2\mid y=ax+b\}$$ contains $\{(x,f(x)):x\in I\}$. Then $\dim_\mathcal{H}(E)\geq \alpha+1$.
\end{theorem}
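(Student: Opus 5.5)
The plan is to split the points of $I$ according to whether the chosen line is tangent to the graph or crosses it transversally. The tangent part will be handled by Lemma~\ref{Falconer}, and the transversal part by a slicing argument that produces the exponent $1+\alpha$. Throughout, we may assume $f$ is nonlinear (this is implicit in the statement: for linear $f$ a single line covers the graph). Then $f'$ is a nonconstant continuous function, so $f'([0,1])$ contains a nondegenerate interval $J$.

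\emph{Step 1 (measurable selection).} The set $A:=\{(x,a)\in I\times\mathbb{R} : (a,f(x)-ax)\in\Lambda\}$ is analytic, provided we first replace $I$ by a Borel subset still satisfying the dimension hypothesis, and its projection to the $x$-axis is all of $I$. Theorem~\ref{Jankov} then gives a measurable slope function $g\colon I\to\mathbb{R}$ such that the line $\ell_x$ of slope $g(x)$ through $(x,f(x))$ lies in $E$ for every $x\in I$. We split $I=T\cup N$, where $T:=\{x: g(x)=f'(x)\}$ and $N:=\bigcup_{k}N_k$ with $N_k:=\{x: |g(x)-f'(x)|\geq 1/k\}$.

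\emph{Step 2 (the hypothesis on $[0,1]\setminus I$).} Since $f'$ is $\alpha$-H\"older, $\dim_\mathcal{H} f'([0,1]\setminus I)\leq \dim_\mathcal{H}([0,1]\setminus I)/\alpha<1$. Hence $f'(I)\supseteq J\setminus f'([0,1]\setminus I)$ has Hausdorff dimension $1$, and so $\max\{\dim_\mathcal{H} f'(T),\dim_\mathcal{H} f'(N)\}=1$. This is exactly where the condition $\dim_\mathcal{H}([0,1]\setminus I)<\alpha$ enters.

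\emph{Step 3 (tangent case).} Suppose $\dim_\mathcal{H} f'(T)=1$. The parameter set $\{(f'(x),f(x)-xf'(x)):x\in T\}\subseteq\Lambda$ has first-coordinate projection $f'(T)$ of dimension $1$. Lemma~\ref{Falconer}, applied to this subfamily (after passing to a measurable, e.g.\ analytic, subset of full dimension), gives $\dim_\mathcal{H}(E)=2\geq 1+\alpha$.

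\emph{Step 4 (transversal case).} Otherwise $\dim_\mathcal{H} f'(N)=1$. By the H\"older bound again, $\dim_\mathcal{H}N\geq\alpha$, so some $N_k$ has $\dim_\mathcal{H}N_k\geq\alpha-\varepsilon$ for any prescribed $\varepsilon>0$. Localize to a short subinterval on which $f'$ oscillates by much less than $1/k$. Further restrict, by pigeonholing, to points where $g$ lies in a small slope window bounded away from $f'$. The lines $\ell_x$, $x\in N_k$, then cross the graph transversally at a uniform angle.

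The aim is to show that the union of such a family has dimension at least $1+\dim_\mathcal{H}N_k$. First try a product-type structure: take a line $L$ parallel to the graph's direction and intersect with lines $L_t$ parallel to $L$ at distance $t$. For each $t$ in a small interval, the map $x\mapsto \ell_x\cap L_t$ should be bi-Lipschitz on $N_k$, up to a controllable error coming from the variation of $g$. Then each slice $E\cap L_t$ has dimension at least $\dim_\mathcal{H}N_k$. Since this holds for a full interval of $t$, a Marstrand-type slicing inequality (the converse direction, via Frostman measures pushed along the lines) yields $\dim_\mathcal{H}E\geq 1+\alpha-\varepsilon$. Letting $\varepsilon\to0$ finishes the proof.

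\emph{Main obstacle.} The real difficulty is Step 4: $g$ is only measurable, so transversal lines at nearby points may cross each other. In that case the map to slices need not be injective, let alone bi-Lipschitz. I would first try to restrict (Lusin) to a compact subset of $N_k$ of nearly full dimension on which $g$ is continuous. I would then use Frostman measures on $N_k$ together with an energy estimate for the pushed-forward measures on $L_t$, averaged in $t$. The key input is that two transversal lines through points $x,x'$ meet the slice $L_t$ at separation $\gtrsim |x-x'|$ for all but a set of $t$ of measure $O(|x-x'|)$. This is in the spirit of Venieri's argument, adapted to Hölder dimension $\alpha$ in place of full dimension.
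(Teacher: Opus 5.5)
Your Steps 1--3 match the paper's argument: the Jankov--von Neumann selection, the tangent/transversal split, Falconer--Mattila on the tangent part, and $\dim_\mathcal{H} f'(S)\le\alpha^{-1}\dim_\mathcal{H}S$ to get $\dim_\mathcal{H}N\ge\alpha$. The gap is Step 4. It carries the whole exponent $1+\alpha$, and it is not actually proved.

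\begin{itemize}
\item The slice maps $x\mapsto\ell_x\cap L_t$ are not bi-Lipschitz. Lines in a slope window of width $w$, through points at distance $r\ll w$, can cross at any height $\tau\gtrsim r/w$.
\item Your stated key input is false. Take $|x-x'|=r$ with slopes differing by about $r/\tau$. The lines then cross at height about $\tau$, and the separation of their traces on $L_t$ is about $(r/\tau)|t-\tau|$. This is below $cr$ for all $t$ in an interval of length about $c\tau$, which does not shrink with $r$.
\item The Lusin step does not help. $N_k$ may be Lebesgue-null, so Lusin for Lebesgue measure yields nothing. And continuity of $g$, even Lipschitz continuity, does not prevent crossings.
\end{itemize}

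Your energy route can be repaired, but with a different estimate. Since $t\mapsto\pi_t(x)-\pi_t(x')$ is affine and of size $\gtrsim|x-x'|$ near the graph, you get $\int_J|\pi_t(x)-\pi_t(x')|^{-s}\,dt\lesssim_{s,J}|x-x'|^{-s}$ for $s<1$. This makes crossings harmless. You would also need the Frostman measure to live on a Borel set, for example the admissible pairs $(x,a)$. It cannot live on $N_k$ itself, which need not be analytic because $g$ is only Lebesgue measurable.

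The paper avoids all of this by passing to parameter space. The map $\Phi(x,y)=(y,f(x)-xy)$ sends a graph point and a slope to the line parameters $(a,b)$. It is $C^1$ with Jacobian determinant $y-f'(x)$, hence locally bi-Lipschitz on the open set $\{y\ne f'(x)\}$. The transversal pairs $\{(x,g(x)):x\in N\}$ have dimension at least $\dim_\mathcal{H}N\ge\alpha$ and are mapped into $\Lambda$, so countable stability gives $\dim_\mathcal{H}\Lambda\ge\alpha$. The vertical slice $E_x=\{ax+b:(a,b)\in\Lambda\}$ is a linear projection of the Borel set $\Lambda$. Marstrand's projection theorem then gives $\dim_\mathcal{H}E_x\ge\alpha$ for a.e.\ $x\in[0,1]$, and Marstrand's slicing theorem yields $\dim_\mathcal{H}E\ge1+\alpha$. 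Only the dimension of the parameter set enters, so whether the lines cross is irrelevant.
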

\begin{proof}
By virtue of Theorem \ref{Jankov}, without loss of generality, we may assume that there exists a measurable function $m\colon I \longrightarrow \mathbb{R}$ such that 
$$
\Lambda=\{(m(x),f(x)-m(x)x):x\in I\}.
$$
Define $A:=\{x\in I:m(x)=f'(x)\}$ and $B:=I\backslash A$. Denote $$
E_A:=\bigcup_{t\in A}\{(x,y)\in\mathbb{R}^2\mid y=m(t)(x-t)+f(t)\}
$$
and
$$
E_B:=\bigcup_{t\in B}\{(x,y)\in\mathbb{R}^2\mid y=m(t)(x-t)+f(t)\}.
$$

Now, assume the contrary that $\dim_\mathcal{H}(E)<\alpha+1$. Since $E_A\subseteq E$, we have $$\dim_\mathcal{H}(E_A)<\alpha+1\leq2.$$ By Lemma \ref{Falconer}, the Hausdorff dimension of $f'(A)$ is strictly less than $1$. Since the function $f\in C^1([0,1])$ is nonlinear, by the intermediate value theorem, there exist real numbers $a<b$ such that $f'([0,1])=[a,b]$, and hence $\dim_\mathcal{H}(f'([0,1]))=1$. Since $f'$ is $\alpha$-H\"older continuous and $\dim_\mathcal{H}([0,1] \setminus I) < \alpha$, we have $$\dim_\mathcal{H}(f'([0,1] \setminus I)) \leq \frac{1}{\alpha}\dim_\mathcal{H}([0,1] \setminus I) < 1.$$ Because $f'([0,1]) \subseteq f'(A) \cup f'(B) \cup f'([0,1] \setminus I)$, the Hausdorff dimension of $f'(B)$ must be exactly $1$. Since $f'$ is $\alpha$-H\"older continuous, we have 
$$
\frac{1}{\alpha}\dim_\mathcal{H}(B)\geq \dim_\mathcal{H}(f'(B))=1,
$$
that is, the Hausdorff dimension of $B$ is at least $\alpha$.

Since coordinate projections are Lipschitz, the Hausdorff dimension of $$\Gamma:=\{(x,m(x))\mid x\in B\}$$ is also at least $\alpha$. Since the continuously differentiable mapping 
$$
\Phi\colon [0,1]\times\mathbb{R}\rightarrow\mathbb{R}^2, \quad (x,y)\mapsto(y,f(x)-xy)
$$
has Jacobian determinant $y-f'(x)$, its restriction on $\{(x,y)\in[0,1]\times\mathbb{R}:y\neq f'(x)\}$ is bi-Lipschitz. Notice that $\Gamma$ is a subset of $\{(x,y)\in[0,1]\times\mathbb{R}:y\neq f'(x)\}$, and that $\Phi(\Gamma)$ is a subset of $\Lambda$. We therefore obtain that 
$$
\dim_\mathcal{H}(\Lambda)\geq\dim_\mathcal{H}(\Phi(\Gamma))=\dim_\mathcal{H}(\Gamma)\geq\alpha.
$$

Now, for each $x\in[0,1]$, consider the vertical slice $E_x:=\{ax+b\mid (a,b)\in\Lambda\}$ of $E$ over $x$. By the strong Marstrand theorem for projections, we have that 
$$
\dim_\mathcal{H}(E_x)\geq\min\{1,\dim_\mathcal{H}(\Lambda)\}\geq\alpha
$$
for Lebesgue a.e. $x\in[0,1]$. But by Marstrand's slicing theorem, we also have 
$$
\dim_\mathcal{H}(E_x)\leq\dim_\mathcal{H}(E)-1
$$
for Lebesgue a.e. $x\in[0,1]$. Therefore, we finally conclude that 
$$
\dim_\mathcal{H}(E)\geq\dim_\mathcal{H}(E_x)+1\geq\alpha+1
$$
as required.
\end{proof}


A natural question arises: is the lower bound of $\alpha + 1$ merely an artifact of the proof technique, or is it a strict geometric necessity? 

In the following theorem, we demonstrate that this bound is indeed sharp. By utilizing the fractional Devil's staircase built over an $\alpha$-dimensional Cantor dust set, we explicitly construct a convex function and a covering family of lines that exactly achieves this minimal possible dimension.

\begin{theorem}\label{example}
Let $\alpha\in[0,1]$. There exist a convex function $f\in C^{1,\alpha}([0,1])$ and a Borel subset $\Lambda$ of $\mathbb{R}^2$, such that the set $$E:=\bigcup_{(a,b)\in \Lambda}\{(x,y)\in\mathbb{R}^2\mid y=ax+b\}$$ contains the graph of $f$ and $\dim_\mathcal{H}(E)=\alpha+1$.
\end{theorem}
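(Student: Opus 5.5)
We plan to build $f$ from a Cantor set $C\subseteq[0,1]$ of Hausdorff dimension $\alpha$ (for $\alpha=0$ a countable set or a very thin Cantor set; for $\alpha=1$ the claim follows from $\dim_\mathcal{H}(E)\le 2$ and Theorem \ref{Hölder}, so take e.g. $f(x)=x^2$ with all tangent lines). For $0<\alpha<1$, let $C$ be the self-similar Cantor set with two pieces and contraction ratio $r=2^{-1/\alpha}$, and let $\mu$ be its natural probability measure. Put $\varphi(x):=\mu([0,x])$, the associated Devil's staircase, which is non-decreasing, continuous, constant on every gap of $C$, and $\alpha$-H\"older: an interval of length $\approx r^n$ meets boundedly many level-$n$ pieces, each of mass $2^{-n}=(r^n)^{\alpha}$. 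Define $f(x):=\int_0^x\varphi(t)\,dt$. Then $f$ is convex, $f'=\varphi$ is $\alpha$-H\"older, so $f\in C^{1,\alpha}([0,1])$, and $f$ is nonlinear.

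\textbf{Covering family.} Over each complementary gap $U$ of $C$ (countably many open intervals, together with the pieces $[0,\min C)$ and $(\max C,1]$), $\varphi$ is constant, so $f$ is affine on $\overline U$ and its graph there is contained in a single line $\ell_U$. For points of $C$ itself we use tangent lines: $\ell_t=\{y=\varphi(t)(x-t)+f(t)\}$, $t\in C$. Set $\Lambda:=\{(\varphi(t),f(t)-t\varphi(t)):t\in C\}\cup\{\text{parameters of }\ell_U\}$. This is Borel, as the continuous image of the compact set $C$ union a countable set. By construction $E$ contains the graph of $f$.

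\textbf{Dimension upper bound.} The countably many lines $\ell_U$ contribute dimension $1$. The set $\Lambda_C$ is the image of $C$ under the map $t\mapsto(\varphi(t),f(t)-t\varphi(t))$. The key observation is that this map factors through $\varphi$: if $\varphi(s)=\varphi(t)$ then $s,t$ bound a gap on which $f$ is affine with slope $\varphi(t)$, so the tangent lines coincide. Hence $\Lambda_C$ is the image of $\varphi(C)\subseteq[0,1]$ under the map $u\mapsto(u,\,f(t_u)-t_u u)$. We will show this map is Lipschitz, or at least that $\Lambda_C$ has dimension at most $1$, which is trivial since it lies in a graph over $[0,1]$. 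A sharper estimate is needed, however: $E_C=\bigcup_{t\in C}\ell_t$ is the image of $\Lambda_C\times\mathbb{R}$ under a locally Lipschitz map, so $\dim_\mathcal{H}(E_C)\le\dim_\mathcal{H}(\Lambda_C\times\mathbb{R})$. That only gives $2$, so we parametrize differently: $E_C$ is the image of $C\times\mathbb{R}$ under $(t,s)\mapsto(t+s,\,f(t)+\varphi(t)s)$, which is Lipschitz on $C\times[-R,R]$ because $f$ and $\varphi$ are Lipschitz on $C$. For $f$ this is clear; for $\varphi$ on $C$ it is not, since $\varphi$ is only $\alpha$-H\"older. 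This is the main obstacle.

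To overcome it, we will instead use upper box dimension with direct covering counts. At scale $\delta=r^n$, cover $C$ by $2^n=\delta^{-\alpha}$ intervals $J$ of length $\delta$. On each $J$, the slopes $\varphi(t)$ vary by at most $\delta^\alpha$, and the lines $\ell_t$ for $t\in J$ all pass within $O(\delta)$ of the point $(t_J,f(t_J))$. Within a window $|x|\le R$, their union is therefore contained in a wedge of angular width $\delta^\alpha$ and thickness $O(\delta)$, which is coverable by $O(\delta^{\alpha-1}\cdot\delta^{-1})$ boxes of side $\delta$. Multiplying gives $N_\delta\lesssim\delta^{-\alpha}\cdot\delta^{\alpha-2}=\delta^{-2}$, which is too crude. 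The fix is to exploit that the slope set $\varphi(C)$ is the full interval, while the intercept points lie on a curve. We plan to bound $E_C\cap\{x=x_0\}$ slice-wise: it equals $\{f(t)+\varphi(t)(x_0-t):t\in C\}$, a H\"older image of $C$ of dimension at most $\alpha/\alpha\cdot$... which again gives $1$.

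Since the naive bounds fail, the honest expectation is that the construction must be refined. Following the spirit of the Cumberbatch–Keleti–Zhang construction, the points of $C$ should be covered by non-tangent lines chosen so that the line parameters form a set of dimension $\alpha$ in a Lipschitz way, for example lines through a fixed point or with slopes taken from a countable set rather than from $\varphi$. Concretely, for $t\in C$ we would use vertical lines $\{x=t\}$, whose union is $C\times\mathbb{R}$ and has dimension exactly $\alpha+1$ by the product formula for a self-similar $C$. If vertical lines are disallowed by the $y=ax+b$ form, we use instead lines of a fixed steep slope $M$, namely $y=M(x-t)+f(t)$. Their union is the image of $C\times\mathbb{R}$ under the bi-Lipschitz shear $(t,s)\mapsto(t+s,\,f(t)+Ms)$, using that $f$ is Lipschitz. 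This gives dimension $\alpha+1$, with equality because this union contains the product $C\times\mathbb{R}$ up to a bi-Lipschitz map. Adding the countably many lines $\ell_U$ gives $\dim_\mathcal{H}(E)=\max\{1,\alpha+1\}=\alpha+1$, and Theorem \ref{Hölder} confirms that this value is optimal.
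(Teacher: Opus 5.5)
Your final construction is exactly the paper's proof, which takes $M=2$: $f'=\varphi$ is the Devil's staircase of an $\alpha$-dimensional Cantor set $C$, the countably many lines carry the affine pieces of the graph over the gaps of $C$, and lines of a fixed slope $M$ pass through the points $(t,f(t))$, $t\in C$, so that their union is a linear shear of $\mathbb{R}\times h(C)$ with $h(t)=f(t)-Mt$. Two small points: the shear is bi-Lipschitz because $M>1=\sup f'$ makes $h$ bi-Lipschitz, whereas Lipschitzness of $f$ alone gives only one direction; and the tangent-line route you abandoned genuinely fails rather than merely resisting your bounds, since its slope set $\varphi(C)=[0,1]$ forces dimension $2$ by the Falconer--Mattila lemma.
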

\begin{proof}
The case of $\alpha=1$ is trivial. We therefore assume $0\leq\alpha<1$ from now on.

Let $C$ be a Cantor set of Hausdorff dimension $\alpha$, viewed as a Lebesgue null subset of $[0,1]$. Let $g\colon[0,1]\rightarrow\mathbb{R}$ be the Devil's staircase, that is, the cumulative probability distribution function of the $\alpha$-dimensional Hausdorff measure on $C$. Then, we have that $g$ is $\alpha$-H\"older continuous, and its restriction on $[0,1]\backslash C$ is locally constant. We define the convex function $f\in C^{1,\alpha}([0,1])$ by
$$
f(x)=\int_0^xg(t)dt.
$$
Also, define $A:=\{(2,f(x)-2x)\mid x\in C\}$ and $B:=\{(f'(x),f(x)-f'(x)x)\mid x\in [0,1]\backslash C\}$. Since $A$ is a continuous image of the compact set $C$, and $B$ is countable by construction, we have that $A\cup B$ is a Borel subset of $\mathbb{R}^2$.
Finally, we denote $$E_A:=\bigcup_{(a,b)\in A}\{(x,y)\in\mathbb{R}^2\mid y=ax+b\}$$ and $$E_B:=\bigcup_{(a,b)\in B}\{(x,y)\in\mathbb{R}^2\mid y=ax+b\}.$$ It suffices to prove that $\dim_\mathcal{H}(E_A\cup E_B)=\alpha+1$.

Since $E_B$ is a countable union of straight lines, we have that $\dim_\mathcal{H}(E_B)=1$. Consider the function $h\in C^{1,\alpha}([0,1])$ defined by $h(x):=f(x)-2x$. Since $h'=g-2$ and $0\leq g\leq1$, we have that $h$ is bi-Lipschitz onto its image. Therefore 
$$
\dim_\mathcal{H}(A)=\dim_\mathcal{H}(h(C))=\dim_\mathcal{H}(C)=\alpha.
$$
It is then readily seen that $\dim_\mathcal{H}(E_A)=1+\alpha$. We therefore conclude that $$\dim_\mathcal{H}(E_A\cup E_B)=\max\{1,1+\alpha\}=1+\alpha$$ as desired.
\end{proof}

\begin{figure}[ht]
\centering
\includegraphics[width=0.5\textwidth]{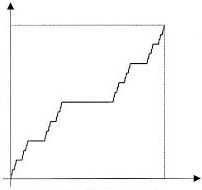} 
\caption{The fractional Devil's staircase, which serves as the derivative $f'(x)$ in our sharp dimensional construction, demonstrating constant intervals over the gaps of the Cantor set.}
\label{devils_staircase}
\end{figure}

To visually grasp the nature of the function $f$ constructed in Theorem \ref{example}, one considers its derivative, the fractional Devil's staircase. As depicted in Figure \ref{devils_staircase}, this function remains locally constant on the gaps of the Cantor set, mapping these intervals into a countable set of values. This geometric plateauing is precisely what forces the dimensional bounds of the tangent family to perfectly collapse to $1+\alpha$.

By combining our main theorem with standard Sobolev inequalities, we can readily extend these geometric bounds to functions defined in fractional Sobolev spaces, a setting heavily utilized in partial differential equations.

\begin{corollary}
Let $s,p\geq1$. Let $f\in W^{s,p}([0,1])$ be a nonlinear function, and let $\Lambda$ be a Borel subset of $\mathbb{R}^2$. Suppose that $(s-1)p>1$, and the set $$E:=\bigcup_{(a,b)\in \Lambda}\{(x,y)\in\mathbb{R}^2\mid y=ax+b\}$$ contains the graph of $f$. Then $\dim_\mathcal{H}(E)\geq \min\{s-1/p,2\}$.
\end{corollary}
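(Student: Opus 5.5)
The plan is to reduce the statement to Theorem \ref{Hölder} through the one-dimensional fractional Sobolev embedding $W^{s,p}([0,1])\hookrightarrow C^{k,\beta}([0,1])$. Write $\gamma:=s-1/p$. The hypothesis $(s-1)p>1$ is equivalent to $\gamma>1$. Hence $f\in C^{1}([0,1])$, and $f'$ is Hölder continuous with some positive exponent.

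\textbf{Case $1<\gamma<2$.} Set $\alpha:=\gamma-1\in(0,1)$. The Morrey/Sobolev embedding for $W^{s,p}$ on an interval gives $f\in C^{1,\alpha}([0,1])$. If $\gamma$ sits at an integer endpoint, or if some embedding loses an $\varepsilon$, we instead apply the argument with $\alpha-\varepsilon$ and let $\varepsilon\to0$. We then apply Theorem \ref{Hölder} with $I=[0,1]$. The complement $[0,1]\setminus I$ is empty, so its dimension is $-\infty$ or $0$, which is less than $\alpha$. The graph of $f$ is contained in $E$ by assumption. Nonlinearity of $f$ is needed inside the proof of Theorem \ref{Hölder}, and we have it. The theorem then gives $\dim_\mathcal{H}(E)\geq 1+\alpha=s-1/p$.

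\textbf{Case $\gamma\geq2$.} Here $f'$ is Lipschitz: we have $f\in C^{1,1}$, or at least $f\in C^{1,\beta}$ for every $\beta<1$. Theorem \ref{Hölder}, applied with $\alpha=1$ or with $\alpha=\beta\to1$, gives $\dim_\mathcal{H}(E)\geq 2$. Together the two cases give $\min\{s-1/p,2\}$.

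The main obstacle is the borderline exponents. The embedding $W^{s,p}\hookrightarrow C^{1,s-1-1/p}$ can fail when $s-1/p$ is an integer. It can also fail in the Besov-type nuance for non-integer $s$, where one gets only $C^{1,\beta}$ for $\beta<s-1-1/p$. In both situations I would use the monotone-limit argument. For every admissible $\beta<\gamma-1$, Theorem \ref{Hölder} gives $\dim_\mathcal{H}(E)\geq 1+\beta$. Taking the supremum over $\beta$ yields the bound $\min\{\gamma,2\}$, because the conclusion is a non-strict inequality.

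One more point is to justify that $f$ (a continuous representative) is nonlinear as a $C^1$ function; this is immediate from the a.e. identification.
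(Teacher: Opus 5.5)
Your proposal is correct and follows the paper's route: the Sobolev embedding puts $f$ in $C^{1,\alpha}([0,1])$ with $\alpha=s-1-1/p$, and Theorem~\ref{Hölder} applied with $I=[0,1]$ finishes the proof. Your two additions are extra care that the paper's one-line proof leaves implicit: the separate treatment of $s-1/p\ge 2$, where the paper's $\alpha$ would exceed $1$ and fall outside the range of Theorem~\ref{Hölder} (this is why the bound carries the $\min$ with $2$), and the $\varepsilon$-loss limiting argument at borderline exponents.
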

\begin{proof}
By the Sobolev embedding theorem, we have $f\in C^{1,\alpha}([0,1])$ for $$\alpha:=(s-1)-\frac{1}{p}.$$ Now apply Theorem \ref{Hölder}.
\end{proof}

\begin{corollary}
Let $p\geq1$. Let $\Lambda$ be a Borel subset of $\mathbb{R}^2$, and $f\in W^{2,p}([0,1])$ a nonlinear function. Suppose that the set $$E:=\bigcup_{(a,b)\in \Lambda}\{(x,y)\in\mathbb{R}^2\mid y=ax+b\}$$ contains the graph of $f$. Then $\dim_\mathcal{H}(E)=2$.
\end{corollary}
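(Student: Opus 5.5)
The plan is to reduce to Theorem \ref{Hölder} with $\alpha=1$ and $I=[0,1]$. The dimension condition on $I$ is then trivially met, since $[0,1]\setminus I=\emptyset$ has dimension $0<1$. So the work is to show that a nonlinear $f\in W^{2,p}([0,1])$ with $p\geq1$ lies in $C^{1,1}([0,1])$, or at least satisfies whatever the proof of Theorem \ref{Hölder} actually uses.

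First I would look at the regularity. For $f\in W^{2,p}([0,1])$ the weak derivative $f'$ lies in $W^{1,p}\subseteq W^{1,1}$, so $f'$ is absolutely continuous. It is Hölder of exponent $1-1/p$ but in general not Lipschitz when $p<\infty$. Hence $f$ need not be in $C^{1,1}$, and a direct appeal with $\alpha=1$ fails. The previous corollary gives only $\alpha=1-1/p$. So the key step is to redo the argument of Theorem \ref{Hölder}, replacing Hölder continuity of $f'$ by the one property that was used: that $f'$ does not raise the dimension of sets by more than the factor $1/\alpha$, in particular
\[
\dim_\mathcal{H}(f'(B))\leq \dim_\mathcal{H}(B)
\]
and that $f'$ maps null sets to null sets.

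For an absolutely continuous $f'$ (Lusin's property (N)), I would argue as follows, following the proof of Theorem \ref{Hölder} verbatim with $I=[0,1]$.
\begin{enumerate}
\item Uniformise $\Lambda$ via Theorem \ref{Jankov} and split $[0,1]=A\cup B$, where $A$ is the tangency set.
\item If $\dim_\mathcal{H}(E)<2$, Lemma \ref{Falconer} gives $\dim_\mathcal{H}(f'(A))<1$.
\item Since $f'([0,1])$ is a nondegenerate interval, $f'(B)$ has positive Lebesgue measure. Indeed $\mathcal{L}^1(f'(A))=0$ and $f'(A)\cup f'(B)$ covers the interval.
\item By property (N), $B$ cannot be Lebesgue null, so $\mathcal{L}^1(B)>0$ and in particular $\dim_\mathcal{H}(B)=1$.
\item The bi-Lipschitz map $\Phi$ away from the tangency locus gives $\dim_\mathcal{H}(\Lambda)\geq1$. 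The Marstrand projection and slicing steps then yield $\dim_\mathcal{H}(E)\geq2$, which is a contradiction.
\end{enumerate}
The upper bound $\dim_\mathcal{H}(E)\leq2$ is trivial.

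The main obstacle is the local use of the bi-Lipschitz property of $\Phi$ on $\{y\neq f'(x)\}$. This now requires only that $f$ be $C^1$, which holds, so $\Phi$ is locally bi-Lipschitz on that open set. To pass from positive measure of $B$ to positive dimension of $\Gamma$, I would write $B$ as the countable union of the sets $B_k=\{x: |m(x)-f'(x)|>1/k\}$. Some $B_k$ has positive measure, and on it the map $\Phi$ is bi-Lipschitz after a further countable decomposition into small pieces.

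A subtlety I would handle with care: does $\Gamma$, as a graph over $B_k$, have dimension at least $1$? Yes, because its projection to the $x$-axis is $B_k$, which has positive measure.
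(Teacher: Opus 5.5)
Your argument is correct, and its decisive input is the same as the paper's: $f'\in W^{1,p}\subseteq W^{1,1}$ is absolutely continuous and therefore has Luzin's property (N). The difference is in what you do with it.

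The paper stops there and quotes the Luzin-property theorem of Cumberbatch, Keleti and Zhang \cite{Cumberbatch2023} as a black box. You instead reprove that theorem by rerunning the proof of the $C^{1,\alpha}$ theorem of Section~4 with $I=[0,1]$. In that proof you replace the H\"older bound $\dim_\mathcal{H}(f'(B))\le\alpha^{-1}\dim_\mathcal{H}(B)$ by the implication $\mathcal{L}^1(f'(B))>0\Rightarrow\mathcal{L}^1(B)>0$. The paper's route takes two lines but depends on an external result. Yours is self-contained within the paper's own tools (Lemma~\ref{Falconer}, the map $\Phi$, and Marstrand's projection and slicing theorems). It also shows that these tools already give the conclusion for every nonlinear $f\in C^1([0,1])$ whose derivative has property (N).

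Your treatment of $\Phi$ is also more accurate than the paper's. $\Phi$ is only \emph{locally} bi-Lipschitz on $\{y\neq f'(x)\}$. It is not even injective there, since $\Phi(x,y)=\Phi(x',y)$ whenever $y$ is the slope of the chord through $(x,f(x))$ and $(x',f(x'))$. So your countable decomposition is genuinely needed.

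Two small corrections.

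First, your preliminary remark that the property being used includes $\dim_\mathcal{H}(f'(B))\le\dim_\mathcal{H}(B)$ is false for absolutely continuous $f'$. An absolutely continuous increasing homeomorphism of $[0,1]$ can carry a Cantor set of dimension $1/10$ onto one of dimension $9/10$, and its primitive lies in $W^{2,1}$. Your steps 1--5 never use this claim, only (N), so nothing breaks.

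Second, steps 2 and 5 need two changes to be airtight:
\begin{itemize}
\item Define the tangency set as $A=\{t\in[0,1]:(f'(t),f(t)-tf'(t))\in\Lambda\}$. This set is Borel, so Lemma~\ref{Falconer} applies to the Borel set $\Lambda\cap\{(f'(t),f(t)-tf'(t)):t\in[0,1]\}$.
\item Apply Marstrand's projection theorem to the original Borel $\Lambda\supseteq\Phi(\Gamma)$ rather than to the uniformised subset, since the projection theorem requires an analytic set.
\end{itemize}
With these changes Theorem~\ref{Jankov} is unnecessary. For $t\in B$, any non-tangent line from $\Lambda$ through $(t,f(t))$ will do, because only the dimension of some graph over $B$ is used.
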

\begin{proof}
Since $f\in W^{2,p}([0,1])$, we have that $f'$ is absolutely continuous, and hence maps Lebesgue null sets to Lebesgue null sets. The desired result then follows directly from the theorem on Luzin's property in \cite{Cumberbatch2023}.
\end{proof}

\section{Applications to Conservation Laws}

The geometric measure theory results established in the previous sections have profound and unexpected applications to the kinematic analysis of nonlinear partial differential equations. Specifically, the method of characteristics naturally generates 1-parameter families of straight lines in spacetime. By interpreting our geometric constraints in this physical context, we can derive rigorous dimensional bounds for the observability of shock-forming systems.

\begin{corollary}[Observability of Characteristics for Conservation Laws]\label{PDE}

Let $u=u(x,t)$ be the solution to the conservation law 
\begin{equation} \label{eq:Conservation}
\dot{u} + \partial_xF(u)  = 0
\end{equation}
subject to the initial condition $u(x,0) = f(x)$. The initial wave speed of equation (\ref{eq:Conservation}) is then $g(x) := F'(f(x))$. Suppose that $g \in C^{1,\alpha}([0,1])$ for some $0 < \alpha \le 1$ and that $g$ is nonlinear. If a Borel subset $\Lambda$ of the space-time plane intersects the characteristic lines of equation (\ref{eq:Conservation}) emanating from all initial positions in the spatial domain $[0,1]$, then $\dim_{\mathcal{H}}(\Lambda) \geq \alpha$.
\end{corollary}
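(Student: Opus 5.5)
The plan is to use point--line duality to turn the corollary into Theorem~\ref{Hölder}. The characteristic line through $(x_0,0)$ is
$$\ell_{x_0}=\{(x,t)\mid x=x_0+g(x_0)t\},$$
where $g(x_0)=F'(f(x_0))$. Put $G(x_0):=x_0+g(x_0)$. Since $g\in C^{1,\alpha}([0,1])$, the function $G$ also lies in $C^{1,\alpha}([0,1])$, and $G$ is nonlinear exactly when $g$ is. This will be the function whose graph is covered. We may assume $\dim_{\mathcal H}(\Lambda)<1$, since otherwise there is nothing to prove. Removing the slice $\{t=0\}$ changes nothing essential: if $\Lambda$ met a positive-dimensional set of characteristics only at $t=0$, then $\Lambda\cap\{t=0\}$ would already contain a set of dimension $\ge\alpha$.

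\textbf{Duality step.} Send each point $(x,t)\in\Lambda$ with $t\neq 0$ to the line
$$\{(X,Y)\mid Y=aX+b\},\qquad a=\tfrac1t-1,\quad b=-\tfrac{x}{t}.$$
Write $\Lambda^*$ for the Borel set of these parameters $(a,b)$. The map $(x,t)\mapsto(1/t-1,\,-x/t)$ is locally bi-Lipschitz on $\{t\neq0\}$, so $\dim_{\mathcal H}(\Lambda^*)=\dim_{\mathcal H}(\Lambda)$.

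The incidence $(x,t)\in\ell_{x_0}$ means $x=x_0+g(x_0)t$. Dividing by $t$, this is equivalent to
$$\frac{x}{t}=\frac{x_0}{t}+g(x_0),\qquad\text{that is,}\qquad G(x_0)=\Big(\frac1t-1\Big)x_0-\frac{x}{t}+x_0 .$$
I would adjust the normalization so that the incidence reads exactly $G(x_0)=ax_0+b$. A cleaner choice is to dualize $(x,t)$ to the line $Y=\tfrac1t X-\tfrac{x}{t}$ and cover the graph of $x_0\mapsto g(x_0)$. Then $(x,t)\in\ell_{x_0}$ becomes precisely $g(x_0)=\tfrac{1}{t}x_0-\tfrac{x}{t}$. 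So the union $E$ of the dual lines contains $\{(x_0,g(x_0)):x_0\in[0,1]\}$, and Theorem~\ref{Hölder} with $I=[0,1]$ gives $\dim_{\mathcal H}(E)\ge 1+\alpha$.

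\textbf{Conclusion.} It remains to bound $\dim_{\mathcal H}(E)\le 1+\dim_{\mathcal H}(\Lambda^*)$. A union of lines parametrized by a set of dimension $s$ is contained in the Lipschitz image of $\Lambda^*\times\mathbb R$ under $((a,b),X)\mapsto(X,aX+b)$, which is locally Lipschitz. Hence its dimension is at most $\dim_{\mathcal H}(\Lambda^*\times\mathbb R)$.

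\textbf{Main obstacle.} The product $\Lambda^*\times\mathbb R$ has dimension at most $\overline{\dim}_B$-type bounds. In general, though, $\dim_{\mathcal H}(A\times\mathbb R)=\dim_{\mathcal H}(A)+1$ because $\mathbb R$ is regular, and I would cite this product formula. Together with the local bi-Lipschitz invariance, which is handled by a countable decomposition into pieces where $|t|$ is bounded away from $0$ and $\infty$, this yields
$$1+\alpha\le\dim_{\mathcal H}(E)\le 1+\dim_{\mathcal H}(\Lambda),$$
and hence $\dim_{\mathcal H}(\Lambda)\ge\alpha$.
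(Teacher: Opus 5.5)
Your overall route is the paper's: dualize each point $(x,t)\in\Lambda$ with $t\neq 0$ to a line through the graph of the wave speed, apply the $C^{1,\alpha}$ covering theorem of Section~4, and close with $\dim_{\mathcal H}(E)\le 1+\dim_{\mathcal H}(\Lambda)$ via the Lipschitz image of $\Lambda^*\times\mathbb R$ and the product formula. Allowing both signs of $t$ is, if anything, more careful than the paper, which only uses $t>0$.

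\textbf{The gap: the slice $\{t=0\}$.} You apply the theorem with $I=[0,1]$, i.e.\ you claim the dual lines cover the whole graph. But a characteristic $\ell_s$ may meet $\Lambda$ only at its foot point $(s,0)$. That point has no dual line, so $(s,g(s))$ need not be covered. Your justification --- that a positive-dimensional set of such characteristics forces $\Lambda\cap\{t=0\}$ to have dimension $\ge\alpha$ --- is false. The set
\[
N:=\{s\in[0,1]:\ \ell_s\cap\Lambda\subseteq\{t=0\}\}
\]
can be nonempty and countable, or have dimension strictly between $0$ and $\alpha$. In that case neither your reduction nor the choice $I=[0,1]$ is available.

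\textbf{The repair.} Use the paper's dichotomy; this is exactly why the Section~4 theorem allows an exceptional set of dimension $<\alpha$. For $s\in N$, the nonempty set $\ell_s\cap\Lambda$ lies in $\ell_s\cap\{t=0\}=\{(s,0)\}$. Hence $N\times\{0\}\subseteq\Lambda$ and $\dim_{\mathcal H}(\Lambda)\ge\dim_{\mathcal H}(N)$. If $\dim_{\mathcal H}(N)\ge\alpha$, you are done. Otherwise $\dim_{\mathcal H}(N)<\alpha$, and every $s\in I:=[0,1]\setminus N$ has a witness on $\ell_s\cap\Lambda$ with $t\neq 0$. The theorem then applies with this $I$.

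\textbf{Smaller errors.} The incidence $x=x_0+g(x_0)t$ gives $g(x_0)=-\tfrac1t x_0+\tfrac{x}{t}$, not $\tfrac1t x_0-\tfrac xt$. So the dual line should be $Y=-\tfrac1t X+\tfrac xt$, as in the paper's $\Phi(x,t)=\tfrac1t(-1,x)$. Your line $Y=\tfrac1t X-\tfrac xt$ covers the graph of $-g$. This is harmless, since $-g$ is also nonlinear and $C^{1,\alpha}$, but the displayed equivalence is wrong as written.

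The abandoned normalization via $G(x_0)=x_0+g(x_0)$ is also incorrect and should be deleted. Finally, the bi-Lipschitz step gives $\dim_{\mathcal H}(\Lambda^*)=\dim_{\mathcal H}(\Lambda\setminus\{t=0\})\le\dim_{\mathcal H}(\Lambda)$, not equality with $\dim_{\mathcal H}(\Lambda)$. The inequality is all you need.
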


\begin{proof}
By the method of characteristics, the characteristic line $L_s$ of equation (\ref{eq:Conservation}) emanating from the initial position $s \in [0,1]$ is defined by the equation
\begin{equation} \label{eq:char_line}
    x = g(s)t + s.
\end{equation}
By assumption, for every $s \in [0,1]$, there exists at least one point $(x(s), t(s)) \in L_s\cap\Lambda$. Define
\[ N := \{s \in [0,1] : L_s \cap \Lambda \subseteq [0,1]\times\{0\} \}. \]
For any $s \in N$, by construction, we have $(s, 0)\in\Lambda$. Therefore $\Lambda \subseteq N \times \{0\}$. Since the projection onto the $x$-axis is a Lipschitz mapping, if $\dim_{\mathcal{H}}(N) \ge \alpha$, then we immediately obtain
\[ \dim_{\mathcal{H}}(\Lambda) \ge \dim_{\mathcal{H}}(N \times \{0\}) = \dim_{\mathcal{H}}(N) \ge \alpha, \]
and the proof is complete.

We may therefore assume that $\dim_{\mathcal{H}}(N) < \alpha \le 1$. Because the Hausdorff dimension of $N$ is strictly less than 1, its 1-dimensional Lebesgue measure is zero. Let $A := [0,1] \setminus N$. It follows that $A$ has full Lebesgue measure in $[0,1]$. Also, for every $s \in A$, there exists a point $(x, t) \in L_s\cap\Lambda$ such that $t > 0$.

Define $\Lambda^+ := \Lambda \cap \{(x,t) \in \mathbb{R}^2 \mid t > 0\}$. Consider the mapping $$\Phi: \{(x,t) \in \mathbb{R}^2 \mid t > 0\} \to \mathbb{R}^2$$ defined by
\[ \Phi(x,t) =\frac{1}{t} ( -1, x). \]
Since $\Phi$ is real analytic, it is locally bi-Lipschitz. Therefore
\begin{equation} \label{eq:dim_tilde}
    \dim_{\mathcal{H}}(\Phi(\Lambda^+)) \le \dim_{\mathcal{H}}(\Lambda^+) \le \dim_{\mathcal{H}}(\Lambda).
\end{equation}

Next, we define
\[ E := \{(x, ax + b)\mid x\in\mathbb{R}, (a,b) \in \Phi(\Lambda^+)\} \]
and define $\varphi: \Phi(\Lambda^+) \times \mathbb{R} \to \mathbb{R}^2$ by $\varphi(a,b\,;x) := (x, ax+b)$. The image of $\varphi$ is exactly $E$. Since $\varphi$ is locally Lipschitz, we have
\begin{equation} \label{eq:dim_E_upper}
    \dim_{\mathcal{H}}(E) \le \dim_{\mathcal{H}}(\Phi(\Lambda^+) \times \mathbb{R}) = \dim_{\mathcal{H}}(\Phi(\Lambda^+)) + 1.
\end{equation}

Now, for any $s \in A$, there exists $(x,t) \in \Lambda^+$ satisfying $x = g(s)t + s$. Dividing by $t$ yields
\[ g(s) = -\frac{1}{t} s + \frac{x}{t}. \] 
Therefore, we have $E\supseteq\{(x,g(x)):x\in A\}$. Because $g \in C^{1,\alpha}([0,1])$ is nonlinear, by Theorem \ref{Hölder}, we conclude that
\begin{equation} \label{eq:dim_E_lower}
    \dim_{\mathcal{H}}(E) \ge 1 + \alpha.
\end{equation}

Inequalities \eqref{eq:dim_tilde},\eqref{eq:dim_E_upper},\eqref{eq:dim_E_lower} together finally yield
\[ \alpha+1 \le \dim_{\mathcal{H}}(E) \le \dim_{\mathcal{H}}(\Phi(\Lambda^+)) + 1 \le \dim_{\mathcal{H}}(\Lambda) + 1, \]
and thus 
$\dim_{\mathcal{H}}(\Lambda) \ge \alpha$.
\end{proof}

\begin{remark}[Physical Significance and Exact Controllability]
\hfil

\textnormal{The result established in Corollary \ref{PDE} carries an interesting and profound physical interpretation regarding the geometric control and observability of nonlinear wave propagation. In the context of conservation laws, characteristic lines represent the spacetime trajectories along which initial data propagates. In modern PDE control theory, if one wishes to deploy a set $\Lambda$ of sensors in spacetime to completely observe or reconstruct the initial state of the system, the sensor network must intersect the characteristic line emanating from every possible initial point.}

\textnormal{It is physically tempting to look for a trivial observability set $\Lambda$ of dimension zero. Indeed, if the initial wave speed function $g(x) = F'(f(x))$ is linear, the characteristic lines can perfectly focus into a single spacetime focal point, creating an instantaneous shock. Placing a single sensor at this exact focal point intersects every characteristic simultaneously, implying that full observability can be achieved with a discrete set of dimension 0.}

\begin{figure}[ht]
\centering
\includegraphics[width=0.7\textwidth]{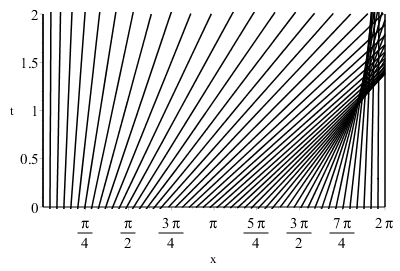} 
\caption{Characteristic lines in the space-time plane for Burger's equation. A linear initial wave speed yields perfect shock focusing, while a general non-linear $C^{1,\alpha}$ wave speed creates a smeared caustic requiring a higher-dimensional sensor.}
\label{shock_waves}
\end{figure}

\textnormal{Figure \ref{shock_waves} provides a geometric visualization of this physical phenomenon. For a linear initial wave speed, the characteristic lines converge perfectly to a single point, i.e. a perfect shock, allowing a 0-dimensional observability set. For a $C^{1,\alpha}$ wave speed, however, the non-linearity forces the characteristics to form a diffuse caustic, necessitating an observability network of dimension at least $\alpha$.}

\textnormal{However, our corollary rigorously proves that this perfect 0-dimensional focusing is entirely an artifact of a linear initial wave speed. When the physical interaction between the initial data $f(x)$ and the flux function $F(u)$ produces a wave speed $g(x)$ that is nonlinear and parameterized by an intermediate H\"older regularity $C^{1,\alpha}$, the varying slopes of the characteristics physically prevent them from perfectly aligning into a single focal point. Instead, the resulting shocks and caustics are necessarily smeared across spacetime. Consequently, any sensor network or control domain $\Lambda$ capable of intersecting all information pathways must maintain a minimal Hausdorff dimension of $\alpha$. This reveals a deep and necessary connection between the fractal dimension of spacetime observatories and the regularity of the physical waves.}
\end{remark}

\section{Functions of Bounded Variation}

In the preceding sections, we observed a striking geometric dichotomy: While $C^2$ curves rigidly force their covering lines to sweep out a 2-dimensional set, relaxing the regularity to $C^{1,\alpha}$ allows this dimension to drop. A natural question then arises: Within the context of differentiable curves, what is the exact boundary that dictates this dimensional collapse? 

To answer this, we transition from classical pointwise derivatives to the realm of distributional derivatives and measure theory. By assuming that the derivative $f'$ is a function of bounded variation, its distributional derivative $f''$ becomes a well-defined finite Radon measure, which allows us to utilize the Lebesgue decomposition theorem to formally isolate the absolutely continuous and singular components of $f''$.

Our classification relies on the following foundational transversality result by L. Venieri, which guarantees that any dimensional collapse must come from a covering family aligning almost everywhere with the tangent directions.

\begin{theorem}[L. Venieri, 2017  \cite{Venieri2017}]\label{Venieri}
Let $\Omega$ be a domain in $\mathbb{R}^{n-1}$. Let $f\in C(\Omega)$ be Lipschitz continuous and let $A$ be
a subset of the graph $M$ of $f$ with positive $(n-1)$-dimensional Hausdorff measure. Let
$B$ be a subset of $\mathbb{R}^n$ satisfying the following two properties:
\begin{enumerate}
    \item [1.] for every $\mathbf{x}\in A$, the set $B$ contains a line $L_\mathbf{x}$ through $\mathbf{x}$;
    \item [2.] if $M$ has an approximate tangent hyperplane $T_\mathbf{x}M$ at $\mathbf{x}$, then $L_\mathbf{x}$ is not contained in $T_\mathbf{x}M$.
\end{enumerate} 
Then, the Hausdorﬀ
dimension of $B$ is at least $(n+2)/2$.
\end{theorem}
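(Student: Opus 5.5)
The plan is to follow Wolff's hairbrush argument for Kakeya sets, with \emph{positions} on the hypersurface $M$ playing the role that directions play in the Kakeya problem. Transversality will supply the planar non-concentration condition that the hairbrush argument needs.

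\textbf{Reductions.} By Rademacher's theorem, $f$ is differentiable at $\mathcal{H}^{n-1}$-almost every point. So $M$ has a genuine tangent hyperplane at almost every point of $A$, and hypothesis 2 applies there. Using a measurable choice of $\mathbf{x}\mapsto L_\mathbf{x}$ (Theorem \ref{Jankov} in its higher-dimensional form) and countable pigeonholing, I pass to a compact $A'\subseteq A$ with $\mathcal{H}^{n-1}(A')>0$ on which:
\begin{enumerate}
    \item[(i)] the direction of $L_\mathbf{x}$ makes an angle at least $c>0$ with $T_\mathbf{x}M$;
    \item[(ii)] the tangent planes vary by much less than $c$ (by Lusin's theorem applied to $\nabla f$, then shrinking to a small patch).
\end{enumerate}
After a rotation, every $L_\mathbf{x}$ with $\mathbf{x}\in A'$ is then uniformly transversal to a fixed hyperplane near $M$. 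By Frostman's lemma, $A'$ carries a measure $\mu$ with $\mu(B(\mathbf{x},r))\lesssim r^{n-1}$.

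\textbf{Discretisation.} Suppose $\dim_\mathcal{H}(B)<s$, with $s<(n+2)/2$. Cover $B$ by dyadic balls $\{B_j\}$ with $\sum_j r_j^{s}<\epsilon$, and pigeonhole a scale $\delta=2^{-k}$ in the standard way. This yields a set $A_\delta\subseteq A'$ of $\mu$-measure $\gtrsim 1/k^2$ such that, for every $\mathbf{x}\in A_\delta$, a $\gtrsim 1/k^2$ fraction of the unit segment of $L_\mathbf{x}$ near $\mathbf{x}$ lies in $B_\delta$, the union of the balls of radius $\delta$. Moreover $|B_\delta|\lesssim \delta^{n-s}$.

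Choose a maximal $\delta$-separated subset of $A_\delta$. This gives about $\delta^{-(n-1)}$ tubes $T_i$ of width $\delta$ and length $1$, through $\delta$-separated points of $M$, each with a large portion inside $B_\delta$.

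\textbf{Wolff axiom.} Let $P$ be a $\delta$-neighbourhood of a $2$-plane. By (i), a plane containing a transversal line meets $M$ in a Lipschitz curve. Hence $P\cap M$ lies in the $\delta$-neighbourhood of a curve of length $O(1)$, and so $P$ contains at most $O(\delta^{-1})$ of the tubes. More generally, a $\rho\times\delta$ slab contains at most $O(\rho\,\delta^{-1})$ tubes at a given scale; this is exactly the bound used in Wolff's proof.

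\textbf{Hairbrush.} If the tube family has the right multiplicity, a two-ends reduction and Cauchy--Schwarz give a tube $T$ (the stem) meeting $\gtrsim \delta^{-(n-1)}$ other tubes at angles $\gtrsim$ a fixed $\theta$. Apply the planar bound above to the $2$-planes through the stem. This shows the bristles in each plane are essentially disjoint away from $T$, and yields
$$|B_\delta|\gtrsim \delta^{\frac{n-2}{2}+\epsilon'}.$$
Combined with $|B_\delta|\lesssim \delta^{n-s}$, this forces $s\geq (n+2)/2$, a contradiction. In the low-multiplicity case, the bound $|B_\delta|\gtrsim\sum_i|T_i|$ already suffices.

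\textbf{Main obstacle.} The hardest step is the hairbrush step in the Hausdorff (rather than Minkowski) setting. The multiscale pigeonholing must be done so that the chosen tubes simultaneously stay $\delta$-separated and lie mostly in $B_\delta$, and so that the Wolff-type planar bound survives the losses of $k^{-2}$. The approach is to use the Frostman measure $\mu$ throughout, exactly as in the passage from Minkowski to Hausdorff dimension in Wolff's Kakeya argument.
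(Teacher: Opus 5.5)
The paper does not prove this statement; it only quotes it from Venieri. As far as I know, Venieri obtains it by checking the axioms of an abstract version of Wolff's hairbrush argument. So your strategy, letting base points on $M$ play the role of directions, is the right one. The gap is in the step that is supposed to make base points behave like directions.

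\textbf{Flatness and the intersection axiom.} Your reduction (ii), Lusin applied to $\nabla f$, only makes the \emph{tangent planes} on $A'$ nearly constant. What the argument needs is that the \emph{secants} of $A'$ are nearly parallel to the common tangent plane $H_0$ at every scale: $|f(v)-f(u)-\nabla f(u)\cdot(v-u)|\le\eta|v-u|$ whenever $(u,f(u)),(v,f(v))\in A'$, with $\eta\cot c$ small. Without this, the planar count you claim is false, and so is the overlap estimate your hairbrush silently uses.

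Here is an example in $\mathbb{R}^n$, $n\ge3$. Take $f(u)=h(u_1)$, where $h$ is a staircase with treads $[k/N,k/N+1/(2N)]$ (where $\nabla f=0$; put $A'$ over these) and risers of slope $1$. Take lines of slope $1/2$ in the $u_1$-direction.
\begin{itemize}
\item These lines make a fixed angle with the exactly constant tangent planes.
\item Yet the points $(s+j/N,\,h(s+j/N))$, $j=0,\dots,N-1$, lie on one such line. So $N$ base points at mutual distance $\ge1/N$ give the same tube.
\item The $2$-plane through that line parallel to the $u_2$-axis meets $A'$ in $N$ parallel segments, not in one curve. Your planar bound therefore fails by a factor $N$ at every $\delta\le1/N$.
\end{itemize}
This does not contradict the theorem, but it shows that (i)--(ii) do not give what you use. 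Your justification (``a plane containing a transversal line meets $M$ in a Lipschitz curve'') would need $M$ to be a flat graph over $H_0$, which it is not.

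The fix is to apply Egorov's theorem to $\omega_r(u)=\sup_{0<|w|<r}|f(u+w)-f(u)-\nabla f(u)\cdot w|/|w|$. Then shrink to a patch below the scale of uniform convergence, so that $A'$ lies in an $\eta$-Lipschitz graph over $H_0$.

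Even then, you must state and prove the axiom that actually replaces $\delta$-separation of directions, which your proposal never mentions:
\begin{itemize}
\item if $p,q\in A'$ and the unit segments of $L_p,L_q$ come within $\delta$ of each other, then $\angle(L_p,L_q)\gtrsim|p-q|-O(\delta)$;
\item hence the $\delta$-tubes satisfy $|T_p\cap T_q|\lesssim\delta^n/(\delta+|p-q|)$.
\end{itemize}
This follows from transversality plus $\eta$-flatness by a short computation. It is exactly what the C\'ordoba-type $L^2$ argument inside each plane through the stem requires; the counting bound alone does not make the bristles in a plane essentially disjoint.

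\textbf{The hairbrush step.} As written, this paragraph does not produce the exponent. A stem does not meet $\gtrsim\delta^{-(n-1)}$ tubes; if it did, the essentially disjoint bristles would already give $|B_\delta|\gtrsim\lambda^{O(1)}$. Here $\lambda\approx k^{-2}$ is the density of $B_\delta$ along the tubes. With typical multiplicity $\mu$, a stem meets about $\mu\lambda\theta\delta^{-1}$ bristles at angle $\sim\theta$.

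The exponent comes from a dichotomy at $\mu\approx\delta^{-(n-2)/2}$:
\begin{itemize}
\item below it, one only has $|B_\delta|\gtrsim\lambda^{O(1)}/\mu$, not $\sum_i|T_i|$;
\item above it, $|B_\delta|\ge|H|\gtrsim\lambda^{O(1)}\mu\theta\delta^{n-2}$, up to logarithms.
\end{itemize}
This bound is linear in $\theta$, so ``angles $\gtrsim$ a fixed $\theta$'' is not free. Counting only shows that pairs at angle $\le\beta$ are negligible for $\beta\ll\delta(\mu\lambda)^{1/(n-2)}$. In the critical case that is about $\delta^{1/2}$, which loses a power of $\delta$.

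Small-angle intersections therefore need a separate argument. Whatever device you import from the Kakeya case has to be re-verified for lines indexed by points of $M$, where the direction is only a measurable function of the base point.

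Finally, a smaller point: Jankov--von Neumann needs an analytic set. Replace $B$ by a $G_\delta$ hull of the same Hausdorff dimension before selecting $x\mapsto L_x$.
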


By synthesizing Venieri's transversality constraint with the area formula for Lipschitz mappings, we now establish the following measure theoretic criterion.

\begin{theorem}\label{BV_Dichotomy}
Let $f \in C^1([0,1])$ such that $f'$ is non-constant of bounded variation. Suppose that there exists a measurable subset $\Lambda$ of $\mathbb{R}^2$ such that the set $$E:=\bigcup_{(a,b)\in \Lambda}\{(x,y)\in\mathbb{R}^2\mid y=ax+b\}$$ contains the graph of $f$ and $\dim_\mathcal{H}(E)<2$. Then, the distributional derivative $f''$ is a singular measure.
\end{theorem}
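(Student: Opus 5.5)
We argue by contraposition: assuming the absolutely continuous part of $f''$ is nonzero, we show $\dim_\mathcal{H}(E)=2$. Write the Lebesgue decomposition $Df'=f''=g\,dx+\mu_s$ with $g\in L^1([0,1])$ and $\mu_s\perp\mathcal{L}^1$. Since $f'$ is BV, it is differentiable a.e. with $(f')'=g$ a.e. Suppose $g\neq0$ on a set of positive measure. Then there is a set $S\subseteq[0,1]$ with $\mathcal{L}^1(S)>0$ such that at each $x\in S$ the derivative $(f')'(x)=g(x)$ exists, is nonzero, and $x$ is a Lebesgue point of $g$ with $\mu_s$-density zero.

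First step: reduce to the tangent family. As in the proof of Theorem \ref{Hölder}, use Theorem \ref{Jankov}, after replacing $\Lambda$ by a Borel or analytic subset if necessary, to select a measurable slope function $m$ on $[0,1]$ with $(m(x),f(x)-m(x)x)\in\Lambda$. Split $[0,1]$ into
\[
A=\{m=f'\}\quad\text{and}\quad B=[0,1]\setminus A .
\]
If $\mathcal{L}^1(B)>0$, then the graph of $f$ over $B$ has positive $\mathcal{H}^1$-measure, since $f$ is $C^1$ and hence Lipschitz. The line through $(x,f(x))$ with slope $m(x)\neq f'(x)$ is transversal to the classical tangent. Theorem \ref{Venieri} with $n=2$ then gives $\dim_\mathcal{H}(E)\geq 2$, a contradiction. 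Hence $\mathcal{L}^1(B)=0$, so $A$ has full measure, and $A\cap S$ has positive measure.

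Second step: apply Lemma \ref{Falconer} to $\Lambda_A=\{(f'(x),f(x)-f'(x)x):x\in A\cap S\}\subseteq\Lambda$. Its projection to the first coordinate is $f'(A\cap S)$. The key claim is that this set has positive Lebesgue measure. For this, use the area formula for the map $f'$ restricted to $A\cap S$. Because $f'$ is differentiable with nonzero derivative at each point of $S$, by the Lipschitz-decomposition argument (the Federer/Lusin-type statement that a function differentiable on a set with nonzero derivative is locally bi-Lipschitz on pieces of a countable partition) we can write $S=\bigcup_k S_k$ with $f'|_{S_k}$ bi-Lipschitz. Choose $k$ with $\mathcal{L}^1(A\cap S_k)>0$. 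Then $\mathcal{L}^1(f'(A\cap S_k))>0$, so $\dim_\mathcal{H}(\pi(\Lambda_A))=1$. Lemma \ref{Falconer} yields $\dim_\mathcal{H}(E)\geq\dim_\mathcal{H}(E_A)=2$, a contradiction. Therefore $g=0$ a.e., i.e. $f''=\mu_s$ is singular. Note that $f''\neq0$ because $f'$ is non-constant, so $f''$ is a nonzero singular measure.

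The main obstacle is the measurability and selection step. The statement only assumes $\Lambda$ is measurable, whereas Theorem \ref{Jankov} needs analytic sets. I would first try to pass to a Borel subset of $\Lambda$ that still covers the graph over a full-measure set. Failing that, I would work directly with the covering relation $\{(x,a):(a,f(x)-ax)\in\Lambda\}$ and argue on its measurable part. A secondary delicate point is the decomposition of $S$ into bi-Lipschitz pieces. Pointwise differentiability with nonzero derivative suffices for this standard lemma: on the set where the difference quotients over scales below $1/j$ stay within a factor $2$ of $g(x)$ and $g$ lies in a fixed dyadic range, $f'$ is bi-Lipschitz on small subintervals. Alternatively, it suffices to invoke the one-dimensional area formula $\mathcal{L}^1(f'(T))\geq$ a positive quantity computed from $\int_T|g|$ restricted to injectivity pieces.
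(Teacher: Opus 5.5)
Your proof is correct, and the second half takes a different route from the paper's. The first half matches the paper: the paper also uses Theorem~\ref{Venieri} to conclude that for a.e.\ $x$ the covering line through $(x,f(x))$ must be the tangent line.

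The second half is where the routes diverge. The paper picks $C\subseteq A\cap\{f''\neq 0\}$ of positive measure on which $f'$ is Lipschitz. It then applies the two-dimensional area formula to $\Phi(x,t)=(t,f(x)+f'(x)(t-x))$ on $C\times[2,3]$, whose Jacobian $f''(x)(x-t)$ is nonzero a.e. This shows that the tangent lines over $C$ already sweep out a set of positive area, which is stronger than $\dim_\mathcal{H}(E)=2$, and it uses no projection theorem. You instead reduce to a one-dimensional statement about the slope set: $\mathcal{L}^1(f'(K))>0$ for a positive-measure $K$ on which $f'$ has nonzero derivative. Only the co-Lipschitz half of your decomposition is needed for this. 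You then let Lemma~\ref{Falconer} supply the two-dimensional conclusion. This mirrors how the tangent part is treated in the $C^{1,\alpha}$ theorem of Section~4 and avoids the area formula. The cost is a much deeper black box and a weaker output: full dimension rather than positive Lebesgue measure of $E$.

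Two small remarks. First, the selection step you call the main obstacle is unnecessary. Measurability of $m$ is never used, because Theorem~\ref{Venieri} only asks that the relevant part of the graph have positive $\mathcal{H}^1$ (outer) measure. So any choice function $m$ works, and $A=\{m=f'\}$ is automatically measurable as the complement of a null set. Equivalently, set $A=\{x:(f'(x),f(x)-f'(x)x)\in\Lambda\}$ directly, as the paper implicitly does; its complement consists of points through which the family contains a transversal line. Second, to meet the measurability hypothesis of Lemma~\ref{Falconer}, apply it to $\{(f'(x),f(x)-f'(x)x):x\in K\}$ with $K\subseteq A\cap S_k$ compact of positive measure. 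This set is a compact subset of $\Lambda$.
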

\begin{proof}
Assume first that there exists such a measurable subset $\Lambda$ with $\dim_\mathcal{H}(E) < 2$. Since $E$ covers the graph of $f$ and $\dim_\mathcal{H}(E) < 2$, Theorem \ref{Venieri} implies that the lines in $E$ cannot intersect the graph $\Gamma(f)$ of $f$ transversally on any set of positive Lebesgue measure. Therefore, there exists a subset $A$ of $[0,1]$ of full Lebesgue measure such that for every $x \in A$, the line in $E$ passing through $(x,f(x))$ is precisely the tangent line to $\Gamma(f)$ at $(x,f(x))$.

Now, assume the contrary that the distributional derivative $f''$ is not a singular measure. Then, by the Lebesgue decomposition theorem, there exist a singular measure $\nu$ and a non-zero absolutely continuous measure $\mu$ such that $f''=\nu+\mu$, which in particular implies there exists a subset of $[0,1]$ of positive Lebesgue measure on which $f$ is twice differentiable and $f''(x) \neq 0$. Because $A$ has full Lebesgue measure, the intersection $B = A \cap \{x \in [0,1] : f''(x) \neq 0\}$ also admits positive Lebesgue measure.

Since $f$ is twice differentiable at all points in $B$, there exists a measurable subset $C$ of $B$ of positive Lebesgue measure on which the derivative $f'$ is Lipschitz. Consider the continuous mapping $\Phi\colon [0,1]\times\mathbb{R}\rightarrow \mathbb{R}^2$ defined by
$$
\Phi(x,t) = (t, f(x)+f^{\prime}(x)(t-x)).
$$
The restriction of $\Phi$ to $C \times \mathbb{R}$ is locally Lipschitz. Furthermore, $\Phi$ is differentiable at almost every point in $C \times \mathbb{R}$ with Jacobian determinant $f''(x)(x-t)$. Since $t-x \geq 1 > 0$ for all $x \in C$ and $t\geq2$, on the product set $C \times [2,3]$, the absolute value of the Jacobian determinant of $\Phi$ is strictly positive almost everywhere.

By the area formula for Lipschitz mappings, the 2-dimensional Lebesgue measure of the set $\Phi(C \times [2,3])$ is strictly positive, since
$$
\int_{C \times [2,3]}|f''(x)|(t-x) \,dx\,dt > 0.
$$
Notice that $\Phi(C \times [2,3])$ is a subset of $E$. Consequently, we obtain that $\dim_\mathcal{H}(E) = 2$, which contradicts our initial assumption that $\dim_\mathcal{H}(E)$ is less than $2$. Therefore, $f''$ must be a singular measure. 
\end{proof}

\section{Main Theorem on Polyhedrons}

We have already explored the dimensional rigidity of line families covering planar curves. One may ask how these geometric constraints manifest in higher-dimensional ambient spaces, particularly in $\mathbb{R}^3$.

The study of unions of lines in $\mathbb{R}^3$ is inextricably linked to the notorious Kakeya set conjecture, which asserts that any compact set containing a unit line segment in every direction must possess a Hausdorff and Minkowski dimension of exactly 3. While the two-dimensional case was resolved by Davies in his seminal work \cite{Davies1971}, the three-dimensional case stood as one of the most prominent open problems in geometric measure theory for decades. See \cite{KatzTao2002} and \cite{Wolff1999} for an introduction to the Kakeya conjecture and a survey of historical
progress on the problem.

Recently, in a monumental breakthrough, Wang and Zahl definitively resolved the Kakeya set conjecture in three dimensions. By utilizing a refined multi-scale decomposition and proving that unions of tubes satisfying specific conditions must attain almost maximal volume, they closed the dimensional gap completely. For our purposes, their result provides the ultimate geometric rigidity required for line families whose directions do not collapse into a null set. We state this continuous formulation of their theorem as follows:

\begin{theorem}[Wang-Zahl, 2025]\label{Wang-Zahl}
Let $\Omega$ be a measurable subset of the unit $2$-sphere $$\mathbb{S}^2:=\{\mathbf{x}\in\mathbb{R}^3:\|\mathbf{x}\|=1\},$$ and $F\colon\Omega\rightarrow\mathbb{R}^3$ a measurable vector field. If $\Omega$ enjoys positive Haar measure, then the set 
$$
\bigcup_{\mathbf{x}\in\Omega}\{F(\mathbf{x})+\mathbf{x}t:t\in[0,1]\}
$$
has Hausdorff dimension $3$.
\end{theorem}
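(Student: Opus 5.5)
The plan is to deduce the statement from the discretized form of the Wang--Zahl theorem, not from the bare statement that Kakeya sets in $\mathbb{R}^3$ have dimension $3$. The bare statement needs a segment in \emph{every} direction, whereas here we only have directions in a set $\Omega$ of positive measure. Write $K_\Omega:=\bigcup_{\mathbf{x}\in\Omega}\{F(\mathbf{x})+\mathbf{x}t:t\in[0,1]\}$; the upper bound $\dim_\mathcal{H}(K_\Omega)\le 3$ is trivial.

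\textbf{Regularization.} First I would reduce to a compact, continuous situation. By inner regularity of the Haar measure and Lusin's theorem, there is a compact $K\subseteq\Omega$ of positive measure on which $F$ is continuous. Replacing $\Omega$ by $K$ only shrinks the union, so it suffices to treat $K$. The set $K_K$ is then the continuous image of the compact set $K\times[0,1]$, hence compact. This removes any measurability issue and makes $\delta$-discretization legitimate. By translating and rescaling we may also assume $K_K$ lies in a fixed ball.

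\textbf{Discretization.} Suppose, for contradiction, that $\dim_\mathcal{H}(K_K)<3-\eta$ for some $\eta>0$. Fix small $\delta$ and choose a maximal $\delta$-separated subset of $K$. Since $K$ has positive measure, it has cardinality $\gtrsim|K|\,\delta^{-2}$. To each chosen direction $\mathbf{x}$ attach the $\delta$-tube $T_\mathbf{x}$ around the segment $F(\mathbf{x})+\mathbf{x}[0,1]$. This family has at most one tube per $\delta$-cap of directions, so it satisfies the Wolff-type (Katz--Tao convex) spacing hypotheses. The discretized Wang--Zahl estimate then gives, for every $\varepsilon>0$ and every shading $Y(T)\subseteq T$ with $|Y(T)|\ge\lambda|T|$,
$$\Big|\bigcup_T Y(T)\Big|\gtrsim_\varepsilon \delta^{\varepsilon}\lambda^{C}\,\#\{T\}\,\delta^2\gtrsim_\varepsilon \delta^{\varepsilon}\lambda^{C}|K|.$$

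\textbf{Bourgain's dyadic pigeonholing.} Cover $K_K$ by balls $B(z_j,r_j)$ with $\sum_j r_j^{3-\eta}<1$ and group them by dyadic radius $r_j\sim 2^{-k}$. Each segment has length $1$, so it meets the union of these balls in a set of total length $\ge 1$. Since $\sum_k k^{-2}<\infty$, for each direction some scale $2^{-k}$ carries length $\gtrsim k^{-2}$. Pigeonholing over directions then yields a single $k$ and a subset of $K$ of measure $\gtrsim k^{-2}|K|$ for which this happens.

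Now apply the discretized estimate at $\delta=2^{-k}$ to the tubes coming from these directions, shading each tube by its intersection with the $2^{-k}$-balls, so $\lambda\gtrsim k^{-2}$. This gives
$$\Big|\bigcup B(z_j,2\cdot2^{-k})\Big|\gtrsim_\varepsilon 2^{-k\varepsilon}k^{-C'}.$$
On the other hand, the number of such balls is at most $2^{k(3-\eta)}$, so their total volume is $\lesssim 2^{-k\eta}$. Taking $\varepsilon<\eta$ gives a contradiction when $k$ is large. Small $k$ is excluded by taking the covering radii small from the outset.

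\textbf{Main obstacle.} The delicate point is the exact form of the Wang--Zahl input. Their theorem is phrased for Kakeya sets, and I must check that their main discretized estimate, with the Katz--Tao convex Wolff axiom and shadings, is available for a tube family indexed by a positive-measure subset of directions rather than a full direction set. If only the full-direction version were available, I would try covering $\mathbb{S}^2$ up to a null set by countably many rotations $R_iK$, using Lebesgue density and a Vitali-type argument. Each rotated union $R_i(K_K)$ has the same dimension as $K_K$, and countable unions preserve Hausdorff dimension. The missing null set of directions would still have to be absorbed, so I expect the discretized route to be the cleaner one.
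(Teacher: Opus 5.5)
The paper gives no proof of this statement. It calls it a ``continuous formulation'' of Wang--Zahl and uses it as a black box. Your argument is correct, and it is essentially the derivation that citation tacitly relies on. So the comparison is between an unexplained attribution and an actual reduction.

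What your route makes explicit is that the headline theorem (a segment in \emph{every} direction) is the wrong input. The right input is Wang--Zahl's volume estimate for shaded families of $\delta$-tubes satisfying the Katz--Tao convex Wolff axioms. Any form of that estimate which loses only $\delta^{o(1)}$ when $\lambda\gtrsim(\log 1/\delta)^{-2}$ is enough for your Bourgain pigeonholing.

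The spacing claim you assert is a one-line check. A convex $W$ containing a unit $\delta$-tube has John-ellipsoid axes $\delta\lesssim a\le b\le c$ with $c\gtrsim 1$. Unit segments in $W$ have directions in an $O(a)\times O(b)$ region of $\mathbb{S}^2$. Hence at most $O(ab/\delta^2)=O(|W|/|T|)$ of your direction-separated tubes lie in $W$, with antipodal pairs costing only a factor $2$.

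Measurability of the pigeonholed sets is harmless. You can use a finite open subcover of the compact $K_K$, which makes $\mathbf{x}\mapsto\mathcal{H}^1$ of the covered part of the segment lower semicontinuous on $K$. Alternatively, just use outer measure: a maximal $2^{-k}$-separated subset of a set of outer measure $m$ has $\gtrsim m\,2^{2k}$ points.

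You were right to drop the rotation fallback, and not only because it is less clean. Lusin may hand you a nowhere dense $K$; then every countable union $\bigcup_i R_iK$ is meagre and so misses directions. The leftover null set of directions can still be two-dimensional, so that route cannot close without an input as strong as the one you are trying to avoid.
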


With this profound dimensional bound established for families of lines possessing a positive measure of directions, we shift our focus to vector fields defined on the two-dimensional boundaries of three-dimensional spatial domains. Specifically, we examine the case where the domain is a polyhedron. The geometric flatness of polyhedral faces provides a rigid algebraic structure that interacts remarkably well with fluid flux. If an absolutely integrable vector field exhibits a strictly positive total flux across the polyhedral boundary, it must inherently point transversally outward on a subset of a flat face with strictly positive two-dimensional Lebesgue measure. 

By exploiting projective duality, we can map this spatial plane to the plane at infinity. This projective linear transformation elegantly converts the positive measure of spatial base points directly into a positive measure of directions, thereby seamlessly satisfying the exact hypotheses of the Wang-Zahl theorem on Kakeya sets. This enables us to establish our main geometric constraint for polyhedral boundaries:

\begin{theorem}
Let $P$ be a polyhedron in $\mathbb{R}^3$, and $v\in L^1(\partial P)^{\oplus3}$ an absolutely integrable vector field. Suppose that the total flux of $v$ through $\partial P$ is positive. Then the set
$$
\bigcup_{\mathbf{x}\in\partial P}\{\mathbf{x}+v(\mathbf{x})t:t\geq0\}
$$
has Hausdorff dimension $3$.
\end{theorem}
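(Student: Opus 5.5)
The plan is to reduce to a single flat face on which $v$ points strictly outward on a set of positive area. From there, a projective transformation turns the rays into a family of segments with a positive measure of directions, which Theorem~\ref{Wang-Zahl} handles.

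\textbf{Step 1: choose a face.} Write $\partial P=\bigcup_{j=1}^k F_j$, where the $F_j$ are the flat faces and $n_j$ is the outward unit normal of $F_j$. The total flux is $\sum_j\int_{F_j} v\cdot n_j\,d\mathcal{H}^2>0$, so some $j$ has $\int_{F_j} v\cdot n_j>0$. Hence the set $S:=\{\mathbf{x}\in F_j: v(\mathbf{x})\cdot n_j>0\}$ has positive $2$-dimensional measure. After shrinking $S$, I may assume that $v\cdot n_j\geq c>0$ and $|v|\leq M$ on $S$ (by Chebyshev), and that $S$ is compact and $v|_S$ is continuous (by Lusin). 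After a rigid motion, $F_j\subseteq\{z=0\}$, $n_j=e_3$, and $v=(v_1,v_2,v_3)$ with $v_3\geq c$ on $S$.

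\textbf{Step 2: projective change of coordinates.} Each ray $\{\mathbf{x}+tv(\mathbf{x}):t\ge0\}$ with $\mathbf{x}=(p,0)\in S$ is reparametrized by height $z=tv_3$, giving the points $\bigl(p+z\,w(p),\,z\bigr)$ with $w:=(v_1,v_2)/v_3$ bounded. I apply the projective map $\Psi(X,Y,Z)=(X/Z,\,Y/Z,\,1/Z)$, which is smooth and locally bi-Lipschitz on $\{Z>0\}$ and sends the plane $Z=0$ to the plane at infinity. A point of the ray goes to $\bigl(w(p)+p/z,\ 1/z\bigr)=w(p)+s\,(p,1)$ with $s=1/z$, which is again a line segment. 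Its base point is now $w(p)$, and its direction is $(p,1)$, which depends only on the original base point $p$. Restricting to $s\in[1,2]$, i.e.\ $z\in[1/2,1]$, the image is a union of unit-type segments whose directions $(p,1)/|(p,1)|$ range over the radial projection of $S\times\{1\}$ to $\mathbb{S}^2$. This is a diffeomorphic image of $S$ and therefore has positive Haar measure.

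\textbf{Step 3: apply Theorem~\ref{Wang-Zahl}.} Set $\Omega$ to be the radial image of $S$ and $F(\mathbf{e})=w(p(\mathbf{e}))$ plus a fixed rescaling. The segments in Theorem~\ref{Wang-Zahl} have unit direction and $t\in[0,1]$, while mine are $w+s(p,1)$ for $s\in[1,2]$. Since $|(p,1)|$ is bounded above and below on the compact set $S$, each of my segments contains a subsegment of the required form, after partitioning $S$ into finitely many pieces if needed. Theorem~\ref{Wang-Zahl} then gives dimension $3$ for the image. Because $\Psi^{-1}$ is locally bi-Lipschitz on the relevant compact region, the portion $\{\mathbf{x}+tv(\mathbf{x}): z\in[1/2,1]\}$ of the original union also has dimension $3$. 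The upper bound $3$ is trivial, so this proves the theorem.

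The main obstacle I expect is measurability and the bookkeeping of Step 3: matching my segments to the exact normalized form of Theorem~\ref{Wang-Zahl}, and ensuring that the vector field $F$ there is measurable. Passing to a compact $S$ on which $v$ is continuous via Lusin's theorem should handle both. A secondary point is that $v$ is defined only $\mathcal{H}^2$-a.e.; fixing any representative is harmless, since we only need a lower bound on the dimension.
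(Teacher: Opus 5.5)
Your proposal is correct and follows the paper's route: pick a face with positive outward flux by pigeonhole, reparametrize the rays by height, apply the projective involution $(x,y,z)\mapsto(x/z,\,y/z,\,1/z)$ so that base points become a positive-measure set of directions $(p,1)/|(p,1)|$, and invoke Theorem~\ref{Wang-Zahl}. Your extra bookkeeping (Lusin/Chebyshev regularization, restricting to $z\in[1/2,1]$ so the Lipschitz comparison takes place on a compact region) makes explicit what the paper leaves implicit, and the finite partition in Step 3 is unnecessary: $|(p,1)|\geq 1$, so each image segment $s\in[1,2]$ already contains the unit segment starting at $(w(p),0)+(p,1)$.
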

\begin{proof}
By Dirichlet's Schubfachprinzip and the assumption on the total flux, it suffices to prove that, for a measurable subset $\Omega$ of $\mathbb{R}^2$ and measurable functions $f,g\colon\Omega\rightarrow\mathbb{R}$, the set  
$$
E:=\bigcup_{(x,y)\in\Omega}\{(x+f(x,y)t,y+g(x,y)t,t):t>0\}
$$
has Hausdorff dimension $3$ whenever $\Omega$ is not a null set.

Consider the projective linear transformation $$\varphi\colon\{(x,y,z)\in\mathbb{R}^3:z\neq0\}\rightarrow\\\{(x,y,z)\in\mathbb{R}^3:z\neq0\}$$ defined by
$$
\varphi(x,y,z) := \left(\frac{x}{z}, \frac{y}{z}, \frac{1}{z}\right).
$$
Take any $(x,y)\in\Omega$. Applying $\varphi$ to the ray $$L:=\{(x+f(x,y)t,y+g(x,y)t,t):t>0\}$$ yields that
\begin{align*}
    \varphi(L) =& \{(f(x,y)+x/t,g(x,y)+y/t,1/t):t>0\}\\=&\{(f(x,y)+xt/\rho,g(x,y)+yt/\rho,t/\rho):t>0\}
\end{align*}
for $\rho(x,y):=\sqrt{x^2+y^2+1}$. 

Since
$
\{ (x,y,1)/\rho(x,y): (x,y)\in\Omega \}
$
is a measurable set of positive measure, as $\Omega$ is, the set $\varphi(E)$ has Hausdorff dimension $3$ by Theorem \ref{Wang-Zahl}. Finally, because $\varphi$ is a smooth involution, we conclude that $\dim_\mathcal{H}(E)=3$.
\end{proof}

\appendix

\section{Methods from Topology}

In this section, we develop a purely topological framework to analyze the geometric rigidity of continuous line families. While our previous results relied heavily on measure theoretic tools such as the area formula and projection theorems, we demonstrate here that the dimensional collapse of the union of lines can also be understood through the lens of algebraic topology. 

We begin by establishing a strengthened variation of the open mapping theorem. Specifically, we prove that a continuous map possessing a full rank differential at merely a single point is forced to map onto a set with a non-empty interior.

\begin{lemma}\label{Brouwer}
    Let $0 \leq n \leq m$, and let $f: \mathbb{R}^{m} \rightarrow \mathbb{R}^{n}$ be a continuous map differentiable at $\mathbf{0} \in \mathbb{R}^{m}$ such that the Jacobian matrix $J$ of $f$ at $\mathbf{0}$ is of full rank, so in particular $J$ admits a left inverse $J^{-1}$ with $$\lambda:=\dfrac{1}{2\|J^{-1}\|_\mathrm{o p}}>0.$$ Then, there exists $ \varepsilon>0$ such that $$f^{-1}(\mathbf{v}) \cap \{\mathbf{x}\in\mathbb{R}^{m}:\|\mathbf{x}\|\leq\varepsilon\} \neq \varnothing$$ and that $$ \lambda\|\mathbf{u}\| \leq\|\mathbf{v}-f(\mathbf{0})\| \leq(\|J\|_\mathrm{o p}+\lambda)\|\mathbf{u}\|$$ for all $\mathbf{v} \in \{\mathbf{y}\in\mathbb{R}^{n}:\|\mathbf{y}-f(\mathbf{0})\|\leq\lambda\varepsilon\}$ and $ \mathbf{u} \in f^{-1}(\mathbf{v}) \cap \{\mathbf{x}\in\mathbb{R}^{m}:\|\mathbf{x}\|\leq\varepsilon\}$.
\end{lemma}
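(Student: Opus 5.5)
The plan is to combine the first-order Taylor estimate at $\mathbf{0}$ with Brouwer's fixed point theorem; only differentiability at the single point $\mathbf{0}$ is available, so the Banach contraction principle cannot be used. Write $R:=J^{-1}$ for the given one-sided inverse. Since $n\le m$ and $J$ has full rank, $J$ is surjective, so $R$ is really a right inverse, $JR=I_n$, and $\|R\|_{\mathrm{op}}=1/(2\lambda)$. By differentiability at $\mathbf{0}$, I will fix $\varepsilon>0$ such that
$$\|f(\mathbf{u})-f(\mathbf{0})-J\mathbf{u}\|\le\lambda\|\mathbf{u}\|\qquad\text{for all }\|\mathbf{u}\|\le\varepsilon.$$
The upper bound $\|\mathbf{v}-f(\mathbf{0})\|\le(\|J\|_{\mathrm{op}}+\lambda)\|\mathbf{u}\|$ for every $\mathbf{u}\in f^{-1}(\mathbf{v})$ with $\|\mathbf{u}\|\le\varepsilon$ then follows at once from the triangle inequality.

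\emph{Existence of preimages.} Set $h(\mathbf{y}):=f(R\mathbf{y})$ for $\mathbf{y}\in\mathbb{R}^n$. For $\|\mathbf{y}\|\le 2\lambda\varepsilon$ we have $\|R\mathbf{y}\|\le\varepsilon$, and therefore
$$\|h(\mathbf{y})-f(\mathbf{0})-\mathbf{y}\|\le\lambda\|R\mathbf{y}\|\le\tfrac12\|\mathbf{y}\|.$$
Given $\mathbf{v}$ with $\|\mathbf{v}-f(\mathbf{0})\|\le\lambda\varepsilon$, define the continuous map $T(\mathbf{y}):=\mathbf{y}-h(\mathbf{y})+\mathbf{v}$. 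On the closed ball of radius $2\lambda\varepsilon$,
$$\|T(\mathbf{y})\|\le\|h(\mathbf{y})-f(\mathbf{0})-\mathbf{y}\|+\|\mathbf{v}-f(\mathbf{0})\|\le\lambda\varepsilon+\lambda\varepsilon=2\lambda\varepsilon,$$
so $T$ maps this ball into itself. Brouwer's theorem gives a fixed point $\mathbf{y}^*$, which means $h(\mathbf{y}^*)=\mathbf{v}$. Then $\mathbf{u}^*:=R\mathbf{y}^*$ satisfies $f(\mathbf{u}^*)=\mathbf{v}$ and $\|\mathbf{u}^*\|\le\varepsilon$, so the intersection in the statement is nonempty.

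\emph{Lower bound, and the main obstacle.} The lower bound $\lambda\|\mathbf{u}\|\le\|\mathbf{v}-f(\mathbf{0})\|$ is the delicate step. When $m=n$, $R$ is a genuine two-sided inverse, so $\|J\mathbf{u}\|\ge\|\mathbf{u}\|/\|R\|_{\mathrm{op}}=2\lambda\|\mathbf{u}\|$. Hence $\|f(\mathbf{u})-f(\mathbf{0})\|\ge 2\lambda\|\mathbf{u}\|-\lambda\|\mathbf{u}\|=\lambda\|\mathbf{u}\|$ for every preimage in the $\varepsilon$-ball, which is the statement exactly as written.

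When $m>n$, the bound cannot hold for every preimage: a vector $\mathbf{u}\in\ker J$ can have $f(\mathbf{u})=f(\mathbf{0})$ with $\mathbf{u}\neq\mathbf{0}$. In that case I would prove the lower bound only for the preimages $\mathbf{u}^*=R\mathbf{y}^*$ produced above, that is, for preimages lying in the range of $R$. For these, $J\mathbf{u}^*=\mathbf{y}^*$ and $\|\mathbf{u}^*\|\le\|\mathbf{y}^*\|/(2\lambda)$, which give
$$\|\mathbf{v}-f(\mathbf{0})\|\ge\|\mathbf{y}^*\|-\lambda\|\mathbf{u}^*\|\ge 2\lambda\|\mathbf{u}^*\|-\lambda\|\mathbf{u}^*\|=\lambda\|\mathbf{u}^*\|.$$
This weaker version is what the later interior-point applications actually need. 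I would flag the statement as requiring this reading when $m>n$.
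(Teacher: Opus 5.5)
Your argument is correct. It uses the same Newton-plus-Brouwer mechanism as the paper, but you run it in a different space, and that turns out to matter.

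The paper applies Brouwer on the $\varepsilon$-ball of $\mathbb{R}^m$ to $\Psi_{\mathbf{v}}(\mathbf{u})=\mathbf{u}-J^{-1}(f(\mathbf{u})-\mathbf{v})$. Every preimage of $\mathbf{v}$ in that ball is a fixed point of $\Psi_{\mathbf{v}}$, so the single estimate (*) gives both the self-map property and the lower bound for all preimages at once. However, (*) comes from rewriting $-\mathbf{u}+J^{-1}(f(\mathbf{u})-f(\mathbf{0}))$ as $J^{-1}\bigl(f(\mathbf{u})-f(\mathbf{0})-J\mathbf{u}\bigr)$. That step needs $J^{-1}J=I_m$, which a full-rank $n\times m$ matrix allows only when $m=n$.

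For every $m\ge n$ one has $R\,T(\mathbf{y})=\Psi_{\mathbf{v}}(R\mathbf{y})$. So your map $T$ is the paper's map restricted to the subspace $R(\mathbb{R}^n)$, on which $RJ$ really is the identity. This explains two things:
\begin{enumerate}
\item your existence argument is valid for all $m\ge n$, and existence is all that Corollary~\ref{Brouwer_cor} uses;
\item your lower bound reaches only preimages lying in $R(\mathbb{R}^n)$.
\end{enumerate}

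Your claim that nothing better is possible is also right. Take $f(x,y)=x$ on $\mathbb{R}^2$, so $J=(1\ 0)$. For any right inverse and any $\varepsilon>0$, the point $\mathbf{u}=(0,\varepsilon)$ lies in $f^{-1}(f(\mathbf{0}))$, yet $\lambda\|\mathbf{u}\|>0=\|f(\mathbf{u})-f(\mathbf{0})\|$.

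So the lemma as stated holds only for $m=n$. In that case your bound $\|J\mathbf{u}\|\ge 2\lambda\|\mathbf{u}\|$ and the paper's (*) give the same conclusion. The paper's proof silently assumes $m=n$, and your flag on the statement is justified.
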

\begin{proof}
    By the definition of derivatives, there exists sufficiently small $\varepsilon>0$ such that $$\|(f(\mathbf{u})-f(\mathbf{0}))-J\mathbf{u}\|<\lambda\|\mathbf{u}\|$$ for all $\mathbf{u} \in \{\mathbf{x}\in\mathbb{R}^{m}:\|\mathbf{x}\|\leq\varepsilon\}$. For any $\mathbf{v} \in \mathbb{R}^{n}$, we define $\Psi_{\mathbf{v}}: \mathbb{R}^{m} \rightarrow \mathbb{R}^{n}$ by $$\Psi_{\mathbf{v}}(\mathbf{u}):=\mathbf{u}-J^{-1}(f(\mathbf{u})-\mathbf{v}).$$ Then, $\Psi_{\mathbf{v}}$ is continuous, and $\Psi_{\mathbf{v}}(\mathbf{u})=\mathbf{u}$ if and only if $J^{-1}(f(\mathbf{u})-\mathbf{v})=0$, that is, $\mathbf{v}=f(\mathbf{u})$. Moreover, we have 
    $$
    \begin{aligned}
    -\Psi_{\mathbf{v}}(\mathbf{u}) &=\mathbf{0}-\mathbf{u}+J^{-1}(f(\mathbf{u})-\mathbf{v}) \\
    &=(\mathbf{0}-\mathbf{u})+J^{-1}(f(\mathbf{u})-f(\mathbf{0}))+J^{-1}(f(\mathbf{0})-\mathbf{v}) \\
    &=J^{-1}((f(\mathbf{u})-f(\mathbf{0}))-J\mathbf{u})+J^{-1}(f(\mathbf{0})-\mathbf{v})
    \end{aligned}
    $$
    Fix $\mathbf{v} \in \{\mathbf{y}\in\mathbb{R}^{n}:\|\mathbf{y}-f(\mathbf{0})\|\leq\lambda\varepsilon\}$. For any $\mathbf{u} \in \{\mathbf{x}\in\mathbb{R}^{m}:\|\mathbf{x}\|\leq\varepsilon\}$, we have
    \begin{equation}\label{eq1}\tag{*}
        \begin{aligned}
        \|\Psi_{\mathbf{v}}(\mathbf{u})\| & \leq\|J^{-1}\|_\mathrm{o p}\|(f(\mathbf{u})-f(\mathbf{0}))-J\mathbf{u}\|+\|J^{-1}\|_\mathrm{o p}\|\mathbf{v}-f(\mathbf{0})\| \\
        & \leq \dfrac{1}{2 \lambda} \cdot \lambda\|\mathbf{u}\|+\dfrac{1}{2 \lambda}\|\mathbf{v}-f(\mathbf{0})\|\\ 
        & \leq \dfrac{1}{2}(\|\mathbf{u}\|+\lambda^{-1}\|\mathbf{v}-f(\mathbf{0})\|)\\ 
        &\leq \dfrac{1}{2}(\varepsilon+\lambda^{-1}\cdot \lambda \varepsilon)\\ 
        & =\varepsilon
        \end{aligned}
        \end{equation}
        and hence $\Psi_{\mathbf{v}}(\mathbf{u}) \in \{\mathbf{x}\in\mathbb{R}^{m}:\|\mathbf{x}\|\leq\varepsilon\}$. Since $\Psi_{\mathbf{v}}(\mathbf{u})$ is continuous on $\{\mathbf{x}\in\mathbb{R}^{m}:\|\mathbf{x}\|\leq\varepsilon\}$ and $$\Psi_{\mathbf{v}}(\{\mathbf{x}\in\mathbb{R}^{m}:\|\mathbf{x}\|\leq\varepsilon\}) \subseteq \{\mathbf{x}\in\mathbb{R}^{m}:\|\mathbf{x}\|\leq\varepsilon\},$$ by Brouwer's fixed point theorem, we conclude that there exists $\mathbf{u}_{*} \in \{\mathbf{x}\in\mathbb{R}^{m}:\|\mathbf{x}\|\leq\varepsilon\}$ such that $\Psi_{\mathbf{v}}(\mathbf{u}_{*})=\mathbf{u}_{*}$, which implies $\mathbf{v}=f(\mathbf{u}_{*})$. Therefore, we have $$\mathbf{u}_{*} \in A:=f^{-1}(\mathbf{v}) \cap \{\mathbf{x}\in\mathbb{R}^{m}:\|\mathbf{x}\|\leq\varepsilon\}$$ and hence $A$ is non-empty. 
        
        Take any $\mathbf{u} \in A$. Then $\mathbf{v}=f(\mathbf{u})$ and hence $\Psi_{\mathbf{v}}(\mathbf{u})=\mathbf{u}$. Therefore by the estimate (\ref{eq1}), we immediately have $$\|\mathbf{u}\|=\|\mathbf{0}-\Psi_{\mathbf{v}}(\mathbf{u})\| \leq \dfrac{1}{2}(\|\mathbf{u}\|+\lambda^{-1}\|\mathbf{v}-f(\mathbf{0})\|),$$ and hence $\lambda\|\mathbf{u}\| \leq\|\mathbf{v}-f(\mathbf{0})\|$. But, we also have
        $$
        \begin{aligned}
        \|\mathbf{v}-f(\mathbf{0})\| &=\|f(\mathbf{u})-f(\mathbf{0})\| \\
        & \leq\|f(\mathbf{u})-f(\mathbf{0})-J\mathbf{u}\|+\|J\mathbf{u}\| \\
        & \leq \lambda\|\mathbf{u}\|+\|J\|_\mathrm{o p}\|\mathbf{u}\| \\
        &=(\|J\|_\mathrm{o p}+\lambda)\|\mathbf{u}\|.
        \end{aligned}
        $$
        Therefore, we finally arrive at the desired inequality $\lambda\|\mathbf{u}\| \leq\|\mathbf{v}-f(\mathbf{0})\| \leq(\|J\|_\mathrm{o p}+\lambda)\|\mathbf{u}\|$. 
\end{proof}

\begin{corollary}\label{Brouwer_cor}
    Let $0 \leq n \leq m$ and let $f: \mathbb{R}^m \rightarrow \mathbb{R}^{n}$ be a continuous map. If $f$ is differentiable at $\mathbf{0}\in\mathbb{R}^m$ and the Jacobian matrix of $f$ at $\mathbf{0}$ is of full rank, then the interior of $f(\mathbb{R}^{m})$ is non-empty.
\end{corollary}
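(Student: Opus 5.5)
The plan is to apply Lemma \ref{Brouwer} directly at the point $\mathbf{0}$ and read off a closed ball inside the image. The hypotheses of Corollary \ref{Brouwer_cor} are exactly those of Lemma \ref{Brouwer}: $f$ is continuous, differentiable at $\mathbf{0}$, and its Jacobian $J$ there has full rank. Since $n\leq m$, full rank means rank $n$, so $J$ is surjective and admits a right inverse, for instance $J^{T}(JJ^{T})^{-1}$. This is the operator playing the role of $J^{-1}$ in the lemma. It maps $\mathbb{R}^n$ to $\mathbb{R}^m$, which is what the fixed-point map $\Psi_{\mathbf{v}}(\mathbf{u})=\mathbf{u}-J^{-1}(f(\mathbf{u})-\mathbf{v})$ needs in order to be a self-map of a ball in $\mathbb{R}^m$. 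So we set $\lambda:=1/(2\|J^{-1}\|_{\mathrm{op}})>0$.

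Invoking Lemma \ref{Brouwer}, we get some $\varepsilon>0$ with the following property: for every $\mathbf{v}$ satisfying $\|\mathbf{v}-f(\mathbf{0})\|\leq\lambda\varepsilon$, the set $f^{-1}(\mathbf{v})\cap\{\|\mathbf{x}\|\leq\varepsilon\}$ is nonempty. In particular each such $\mathbf{v}$ lies in $f(\mathbb{R}^m)$. Hence the closed ball of radius $\lambda\varepsilon>0$ about $f(\mathbf{0})$ is contained in $f(\mathbb{R}^m)$. Its interior, the open ball of the same radius, is a nonempty open subset of $f(\mathbb{R}^m)$, so the interior of $f(\mathbb{R}^m)$ is nonempty.

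The only real obstacle is interpretive. The lemma speaks of a ``left inverse'' of $J$, but when $n<m$ a full-rank $n\times m$ matrix has only a right inverse. I would therefore check that the proof of Lemma \ref{Brouwer} uses only two facts: that $J^{-1}$ is linear from $\mathbb{R}^n$ to $\mathbb{R}^m$, and that $J^{-1}\mathbf{w}=0$ implies $\mathbf{w}=0$. The second fact holds for the right inverse $J^{T}(JJ^{T})^{-1}$, because it is injective. With that check done, the corollary follows from the lemma without further work. The degenerate case $n=0$ is trivial, since $\mathbb{R}^0$ is a point and is its own interior.
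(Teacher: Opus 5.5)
You take the same route as the paper, whose proof is just that Lemma~\ref{Brouwer} puts a closed ball about $f(\mathbf{0})$ inside $f(\mathbb{R}^m)$. You are also right that for $n<m$ the ``left inverse'' in that lemma cannot exist; the paper passes over this silently. But your patch fails. The proof of Lemma~\ref{Brouwer} needs more than linearity and injectivity of $J^{-1}$: it also uses $J^{-1}J=I_m$, in the identity
\[
-\Psi_{\mathbf{v}}(\mathbf{u})=J^{-1}\bigl((f(\mathbf{u})-f(\mathbf{0}))-J\mathbf{u}\bigr)+J^{-1}(f(\mathbf{0})-\mathbf{v}),
\]
which silently rewrites $-\mathbf{u}$ as $-J^{-1}J\mathbf{u}$. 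That identity is what feeds the estimate $(*)$ making $\Psi_{\mathbf{v}}$ a self-map of the $\varepsilon$-ball. For a right inverse $R$ with $n<m$, $RJ$ is only a projection with nontrivial kernel $\ker J$, so an extra term $(I-RJ)\mathbf{u}$ of norm up to $\|\mathbf{u}\|$ appears. Concretely, take $f(x_1,x_2)=x_1$. Then $J=\begin{pmatrix}1&0\end{pmatrix}$, $R=(1,0)^{T}$, $\lambda=1/2$ and $\Psi_{\mathbf{v}}(\mathbf{u})=(v,u_2)$. For $\mathbf{u}=(0,\varepsilon)$ and $v=\varepsilon/2$ this has norm $\tfrac{\sqrt5}{2}\varepsilon>\varepsilon$, so Brouwer's theorem cannot be invoked on that ball for any $\varepsilon>0$. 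The lemma's second conclusion also fails here: $\mathbf{u}=(0,\varepsilon)\in f^{-1}(0)$, yet $\lambda\|\mathbf{u}\|>0=\|\mathbf{v}-f(\mathbf{0})\|$. So the lemma and its proof are valid only in the square case $n=m$.

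The corollary itself survives by reducing to the square case. Put $g:=f\circ R\colon\mathbb{R}^n\to\mathbb{R}^n$. Then $g$ is continuous and differentiable at $\mathbf{0}$ with $Dg(\mathbf{0})=JR=I_n$ by the chain rule. Lemma~\ref{Brouwer} therefore applies with $m=n$, where the left inverse is a genuine inverse, and gives a ball about $g(\mathbf{0})=f(\mathbf{0})$ contained in $g(\mathbb{R}^n)\subseteq f(\mathbb{R}^m)$. Equivalently, run the Brouwer argument on the ball of the $n$-dimensional subspace $R(\mathbb{R}^n)=(\ker J)^{\perp}$: $\Psi_{\mathbf{v}}$ preserves it, and $RJ$ is the identity there. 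With this in place of your proposed check, the rest of your argument, including the $n=0$ case, goes through.
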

\begin{proof}
    By Lemma \ref{Brouwer}, there exists $\delta>0$ such that for all $\mathbf{v} \in \{\mathbf{y}\in\mathbb{R}^{n}:\|\mathbf{y}-f(\mathbf{0})\|\leq\delta\}$ the fiber $f^{-1}(\mathbf{v})$ over $\mathbf{v}$ is non-empty. Therefore $f(\mathbb{R}^{m})$ admits a non-empty interior.
\end{proof}

With this topological foundation established, we can now study the geometry of $1$-parameter families of lines. The following proposition proves that if a continuous family of lines collapses into a set with an empty interior, its parameterizing function is forced to be nowhere differentiable.

\begin{proposition}
Let $f\in C([0,1])$. Suppose that the interior of the set 
$$\bigcup_{p\in[0,1]}\{(x,y)\in\mathbb{R}^2:y=px+f(p)\}$$ is empty, then $f$ is nowhere differentiable.
\end{proposition}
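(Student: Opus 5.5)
We argue by contrapositive. Suppose $f$ is differentiable at some point $p_0\in[0,1]$; we show that the union of lines has nonempty interior. The natural tool is Corollary \ref{Brouwer_cor}, applied to the parametrisation map
$$
\Phi\colon (p,x)\mapsto (x,\,px+f(p)),
$$
whose image is exactly the union in question once $p$ ranges over $[0,1]$.

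First we need $\Phi$ to be defined on all of $\mathbb{R}^2$ and continuous. Extend $f$ continuously to $\mathbb{R}$, say by constants outside $[0,1]$. If $p_0$ lies at an endpoint, first replace it by a nearby interior point. A cleaner alternative is to handle the endpoint case using only one-sided differentiability together with the local argument of Lemma \ref{Brouwer}; I address this below.

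Near $(p_0,x_0)$ the Jacobian of $\Phi$ is
$$
\begin{pmatrix} 0 & 1\\ x+f'(p_0) & p_0\end{pmatrix},
$$
with determinant $-(x+f'(p_0))$. Choose $x_0\neq -f'(p_0)$; then the Jacobian at $(p_0,x_0)$ has full rank. Note that $\Phi$ is differentiable at $(p_0,x_0)$, since $f$ is differentiable at $p_0$ and the remaining terms are polynomial. After translating coordinates so that $(p_0,x_0)$ becomes the origin, Corollary \ref{Brouwer_cor} gives that $\Phi(\mathbb{R}^2)$ has nonempty interior.

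Applying the corollary verbatim would only say something about the image of the extended map. The real content is Lemma \ref{Brouwer}: every $\mathbf v$ in a small ball around $\Phi(p_0,x_0)$ has a preimage within distance $\varepsilon$ of $(p_0,x_0)$. So I would invoke Lemma \ref{Brouwer} directly and take $\varepsilon$ small enough that those preimages have $p$-coordinate in $[0,1]$. The covered ball then lies inside the genuine union over $p\in[0,1]$, contradicting the assumption that its interior is empty.

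The main obstacle is the endpoint case $p_0\in\{0,1\}$. There the preimage may fall at $p$ slightly outside $[0,1]$, where the extension of $f$ is artificial. Two fixes are available:
\begin{itemize}
\item Extend $f$ past the endpoint by its tangent line. The extension is then differentiable at $p_0$ with the same derivative. Lines with parameter $p\notin[0,1]$ are spurious, but all of them pass through the single point $(-p_0,\,f(p_0)-p_0f'(p_0))$, so the spurious lines still lie in a set that is small in a suitable sense. This approach is messy.
\item Preferably, interpret ``nowhere differentiable'' on the open interval $(0,1)$, or note that endpoint differentiability is one-sided. In the worst case we simply restrict to differentiability points in $(0,1)$, where the argument above is clean.
\end{itemize}
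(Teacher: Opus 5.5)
For a differentiability point in $(0,1)$ your argument is the paper's: the same map $\Phi(p,x)=(x,px+f(p))$, an invertible Jacobian at any $x_0\neq -f'(p_0)$ (the paper picks $x_0=1-f'(a)$), and Brouwer-type local surjectivity. You are in fact more careful than the paper. The paper applies Corollary~\ref{Brouwer_cor} to a map defined only on $[0,1]\times\mathbb{R}$, whereas you extend $f$ and use the localisation in Lemma~\ref{Brouwer}. Shrinking the radius there is legitimate because of the lower bound $\lambda\|\mathbf u\|\le\|\mathbf v-\Phi(p_0,x_0)\|$.

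The gap is the endpoint case, which the statement does cover. For $f\in C([0,1])$, a one-sided derivative at $0$ or $1$ counts as differentiability, so your second fix merely weakens the claim. Moving $p_0$ to a nearby interior point is also unjustified, since $f$ need not be differentiable anywhere else. Your first fix is the right construction, but the reason you give is wrong. The spurious lines pass through $Q=(-f'(p_0),\,f(p_0)-p_0f'(p_0))$, not $(-p_0,\dots)$. More importantly, a pencil of lines through $Q$ with an interval of slopes fills an open sector, so it is not small in measure or in dimension. The paper's proof, which allows $a\in[0,1]$, passes over this point silently as well.

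What closes the gap is that the spurious lines are one-sided. For $p_0=0$, set $\tilde f(p)=f(0)+f'(0)p$ for $p<0$ and $\tilde f=f(1)$ on $(1,\infty)$. Then $\tilde f$ is continuous on $\mathbb{R}$ and differentiable at $0$; let $c:=x_0+f'(0)\neq 0$.

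\begin{itemize}
\item Lemma~\ref{Brouwer}, applied to $\tilde\Phi$ at $(0,x_0)$ and after shrinking, gives $r\in(0,|c|)$ such that every $\mathbf v$ with $\|\mathbf v-(x_0,f(0))\|<r$ equals $\tilde\Phi(p,x)$ for some $|p|<1$.
\item If such a $p$ is negative, then $v_2-f(0)=p\,(v_1+f'(0))$. Since $|v_1-x_0|<|c|$, the factor $v_1+f'(0)$ has the sign of $c$, so $c\,(v_2-f(0))<0$.
\item Hence every point of the nonempty open half-disk of that ball where $c\,(v_2-f(0))>0$ equals $\tilde\Phi(p,x)$ with $p\in[0,1)$. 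It therefore lies on a genuine line of the family, and the union has nonempty interior.
\end{itemize}

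The case $p_0=1$ is symmetric. Extend $f$ by its tangent line for $p>1$, and keep the half-disk on which the slope of the line through $Q$ and $\mathbf v$ is less than $1$. The ball avoids $Q$, since $|x_0+f'(1)|>r$.
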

\begin{proof}
Assume for the sake of contradiction that $f$ is differentiable at some point $a \in [0,1]$. Consider the continuous mapping $\Phi\colon [0,1] \times \mathbb{R} \to \mathbb{R}^2$ defined by
$$
\Phi(p,x) := (x, px + f(p)).
$$
The image of $\Phi$ is precisely $$E := \bigcup_{p\in[0,1]}\{(x,y)\in\mathbb{R}^2 \mid y=px+f(p)\}.$$ Because $f$ is differentiable at $a$, the mapping $\Phi$ is differentiable at $(a, x)$ for any spatial coordinate $x \in \mathbb{R}$. The Jacobian matrix of $\Phi$ at the point $(a, 1-f'(a))$ is
$$
J= \begin{bmatrix} 0 & 1 \\ 1-f'(a) + f'(a) & a \end{bmatrix}=\begin{bmatrix} 0 & 1 \\ 1 & a \end{bmatrix}.
$$
Since $\det(J)=-1$, the matrix $J$ achieves full rank.

Now, by Corollary \ref{Brouwer_cor}, we conclude that $E$ contains an open disk. 
\end{proof}
\section{Generalization to Hypersurfaces}

In this appendix, we demonstrate that the high-dimensional analogue of the geometric rigidity established for planar curves holds true. Specifically, we prove that the union of tangent hyperplanes to a generic differentiable hypersurface has full Hausdorff dimension.

\begin{definition}
Let $\Omega$ be a domain in $\mathbb{R}^d$ and let $f\colon \Omega \rightarrow \mathbb{R}$ be an arbitrary function. $f$ is termed a linear function if there exists a constant vector $\mathbf{a} \in \mathbb{R}^d$ and a scalar $b \in \mathbb{R}$ such that $f(\mathbf{x})=\langle\mathbf{a}, \mathbf{x}\rangle+b$ for all $\mathbf{x} \in \Omega$.
\end{definition}

Recall from calculus that a differentiable function $f$ is linear if and only if its gradient $\nabla f$ is constant on $\Omega$.

\begin{lemma}\label{lemmaB.2}
Let $\left\{f_n\right\}_{n=1}^{\infty}$ be a sequence of linear functions defined on an interval $I$ in $\mathbb{R}$. If $\left\{f_n\right\}_{n=1}^{\infty}$ converges pointwise, then its limit is a linear function.
\end{lemma}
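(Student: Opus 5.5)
The idea is that a linear function on an interval is completely determined by its values at two distinct points, and that the linear coefficients depend linearly on those two values. Pointwise convergence at two points therefore forces the coefficients themselves to converge, and the limit is automatically linear.

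If $I$ is empty or a single point, every function on $I$ is trivially linear (take $a=0$). So assume $I$ contains two distinct points $x_1<x_2$.

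Write $f_n(x)=a_n x+b_n$ with $a_n,b_n\in\mathbb{R}$, and let $f$ be the pointwise limit. Evaluating at $x_1$ and $x_2$ and solving the resulting $2\times 2$ linear system (Cramer's rule) gives
$$
a_n=\frac{f_n(x_2)-f_n(x_1)}{x_2-x_1},\qquad b_n=\frac{x_2f_n(x_1)-x_1f_n(x_2)}{x_2-x_1}.
$$
Since $f_n(x_1)\to f(x_1)$ and $f_n(x_2)\to f(x_2)$, both sequences converge:
$$
a_n\to a:=\frac{f(x_2)-f(x_1)}{x_2-x_1},\qquad b_n\to b:=\frac{x_2f(x_1)-x_1f(x_2)}{x_2-x_1}.
$$

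Now fix any $x\in I$. Then $f(x)=\lim_n f_n(x)=\lim_n (a_n x+b_n)=ax+b$. Hence $f$ is linear in the sense of the preceding Definition.

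There is no real obstacle here. The only point needing care is the degenerate case where $I$ has fewer than two points, and that case is trivial.
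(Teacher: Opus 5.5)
Your proof is correct and follows the same route as the paper: fix two distinct points of $I$, deduce from convergence of $f_n$ at those points that the slopes $a_n$ and intercepts $b_n$ converge, and pass to the limit in $a_n x + b_n$. Your explicit formula for $b_n$ and your treatment of intervals with fewer than two points are small refinements that the paper leaves implicit.
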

\begin{proof}
By assumption, for each $n\geq1$ there exist real numbers $a_n$ and $b_n$ such that $$f_n(t)=a_n t+b_n$$ for all $t \in I$. Fix two distinct points $x,y\in I$. The sequences of evaluations $f_n(x)$ and $f_n(y)$ converge. Consequently, the sequences of slopes $$a_n = \frac{f_n(x)-f_n(y)}{x-y}$$ and intercepts $b_n$ must also converge to some limits $a$ and $b$, respectively. Thus, for all $t \in I$, we have $$\lim_{n \rightarrow \infty} f_n(t) = a t+b,$$ that is, the sequence $\left\{f_n\right\}_{n=1}^{\infty}$  converges to the linear function $f(t)=at+b$.
\end{proof}

\begin{lemma}\label{lemmaB.3}
Let $\Omega$ be a domain in $\mathbb{R}^d$ and $f\colon \Omega \rightarrow \mathbb{R}$ be a differentiable function. If $f$ is not a linear function, then there exists a point $\mathbf{x} \in \Omega$ such that $\left.f\right|_U$ is not a linear function for any neighborhood $U$ of $\mathbf{x}$ in $\Omega$.
\end{lemma}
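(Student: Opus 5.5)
We argue by contradiction: suppose that every point $\mathbf{x}\in\Omega$ has an open neighborhood $U_{\mathbf{x}}\subseteq\Omega$ on which $f$ is linear. The plan is to show that the gradient $\nabla f$ is then locally constant on $\Omega$, and to use the connectedness of the domain $\Omega$ to conclude that $\nabla f$ is globally constant. By the remark preceding Lemma \ref{lemmaB.2}, this makes $f$ linear, which is the desired contradiction.

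\emph{Step 1 (local constancy).} If $f|_{U_{\mathbf{x}}}=\langle\mathbf{a}_{\mathbf{x}},\cdot\rangle+b_{\mathbf{x}}$, then, since $U_{\mathbf{x}}$ is open and $f$ is differentiable, $\nabla f\equiv\mathbf{a}_{\mathbf{x}}$ on $U_{\mathbf{x}}$. Hence the map $\nabla f\colon\Omega\to\mathbb{R}^d$ is locally constant.

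\emph{Step 2 (global constancy).} Fix $\mathbf{x}_0\in\Omega$ and set
$$S:=\{\mathbf{x}\in\Omega:\nabla f(\mathbf{x})=\nabla f(\mathbf{x}_0)\}.$$
This set is nonempty. It is open because $\nabla f$ is locally constant. Its complement in $\Omega$ is a union of the open sets $U_{\mathbf{x}}$ on which $\nabla f$ takes a different constant value, so the complement is also open. Since a domain is connected, $S=\Omega$. Thus $\nabla f\equiv\mathbf{a}$ is constant.

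\emph{Step 3 (linearity).} It remains to check that $f(\mathbf{x})-\langle\mathbf{a},\mathbf{x}\rangle$ is constant on $\Omega$. This function is differentiable with zero gradient, so by the same open-closed argument (or by the mean value theorem along polygonal paths, since domains are polygonally connected) it equals some constant $b$. Therefore $f$ is linear, contradicting the hypothesis.

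The only delicate point is Step 2. It uses that a domain is connected, and it needs local, not pointwise, information about $\nabla f$, which is exactly what the neighborhoods $U_{\mathbf{x}}$ provide. Lemma \ref{lemmaB.2} is not needed here. It could serve in an alternative argument that extends linearity along line segments by passing to limits, but the connectedness argument is more direct.
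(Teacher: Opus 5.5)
Your proof is correct and follows the paper's argument: assume every point has a neighborhood on which $f$ is linear, conclude that $\nabla f$ is locally constant, use connectedness of $\Omega$ to make it globally constant, and deduce that $f$ is linear. Your Steps 2 and 3 just write out the open--closed argument and the fact that zero gradient implies constant, which the paper uses in a single line.
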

\begin{proof}
Assume the contrary that for each $\mathbf{x} \in \Omega$ there exists a neighborhood $U$ of $\mathbf{x}$ such that $\left.f\right|_U$ is a linear function. This implies that the gradient $\nabla f$ is constant on $U$, meaning $\nabla f$ is locally constant everywhere in $\Omega$. Since $\Omega$ is a connected domain, a locally constant function must be globally constant. Thus $\nabla f$ is globally constant, which implies $f$ is a linear function, yielding a contradiction.
\end{proof}

To bound the dimension of the resulting union of hyperplanes, we must guarantee that a nonlinear function contains a sufficiently large family of nonlinear 1-dimensional slices. It is a standard fact of geometry that if a continuous function is linear on every possible line segment within its domain, then the function is globally linear. We utilize this to establish the following slice condition:

\begin{lemma}\label{lemmaB.4}
Let $\Omega$ be a domain in $\mathbb{R}^d$ and $f\in C(\Omega)$. If $f$ is not a linear function, then there exists a unit vector $\mathbf{v} \in \mathbb{R}^d$ and an open ball $B$ in the orthogonal complement $\operatorname{Span}(\mathbf{v})^\perp$ such that, for all points $\mathbf{a} \in B$, the function $f_{\mathbf{a}}(t) := f(\mathbf{a} + t\mathbf{v})$ is not linear.
\end{lemma}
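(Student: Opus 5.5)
The plan is to find one segment on which $f$ fails to be linear, and then spread that failure to nearby parallel lines using continuity and a three-point test.

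\textbf{Step 1: a single nonlinear segment.} Since $f$ is continuous and not linear on the domain $\Omega$, the standard fact quoted before the statement tells us that $f$ cannot be linear on every line segment in $\Omega$. Concretely, if $f$ were linear on every segment, it would satisfy Jensen's equality $f((1-s)\mathbf{x}+s\mathbf{y})=(1-s)f(\mathbf{x})+sf(\mathbf{y})$ on every ball contained in $\Omega$. It would therefore be affine on each such ball, so $\nabla f$ would be locally constant. Connectedness then gives global linearity, exactly as in Lemma \ref{lemmaB.3}.

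So there are three collinear points $\mathbf{p}_0,\mathbf{p}_1,\mathbf{p}_2\in\Omega$ with the closed segment $[\mathbf{p}_0,\mathbf{p}_2]\subseteq\Omega$. We may write $\mathbf{p}_i=\mathbf{a}_0+t_i\mathbf{v}$ with $\mathbf{v}$ a unit vector, $\mathbf{a}_0\in\operatorname{Span}(\mathbf{v})^\perp$ and $t_0<t_1<t_2$, such that the three-point defect
$$D(\mathbf{a}):=(t_2-t_0)f(\mathbf{a}+t_1\mathbf{v})-(t_2-t_1)f(\mathbf{a}+t_0\mathbf{v})-(t_1-t_0)f(\mathbf{a}+t_2\mathbf{v})$$
is nonzero at $\mathbf{a}=\mathbf{a}_0$. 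A one-variable function is linear on an interval only if this defect vanishes for every triple of points in it.

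\textbf{Step 2: openness.} The compact segment $[\mathbf{p}_0,\mathbf{p}_2]$ lies in the open set $\Omega$, so it has positive distance $r$ to the complement of $\Omega$. For $\mathbf{a}\in\operatorname{Span}(\mathbf{v})^\perp$ with $\|\mathbf{a}-\mathbf{a}_0\|<r$, the translated segment $\{\mathbf{a}+t\mathbf{v}:t\in[t_0,t_2]\}$ also lies in $\Omega$, so $D(\mathbf{a})$ is defined there. By continuity of $f$, the map $\mathbf{a}\mapsto D(\mathbf{a})$ is continuous on this ball of $\operatorname{Span}(\mathbf{v})^\perp$. Since $D(\mathbf{a}_0)\neq0$, there is a smaller open ball $B$ around $\mathbf{a}_0$ on which $D\neq0$.

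\textbf{Step 3: conclusion.} For every $\mathbf{a}\in B$, the function $f_{\mathbf{a}}(t)=f(\mathbf{a}+t\mathbf{v})$ is defined on a set containing $[t_0,t_2]$. Its three-point defect at $t_0,t_1,t_2$ is nonzero, so $f_{\mathbf{a}}$ is not linear there, and hence not linear on its whole domain. This is the conclusion of the lemma.

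\textbf{Main obstacle.} The delicate point is Step 1, namely justifying the "standard fact" carefully. Continuity is what upgrades midpoint or segment linearity to affinity on convex pieces, and connectedness of $\Omega$ is what passes from local to global linearity, as in Lemma \ref{lemmaB.3}. The natural domain of $f_{\mathbf{a}}$ may be a disconnected subset of $\mathbb{R}$, but this causes no trouble: the nonlinearity is witnessed inside a single segment, which is all the lemma requires.
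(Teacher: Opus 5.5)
Your proof is correct and is essentially the paper's argument run directly instead of by contradiction. The paper argues that, under the contrary hypothesis, the set of $\mathbf{a}$ with $f_{\mathbf{a}}$ linear is dense, and also closed (pointwise limits via Lemma \ref{lemmaB.2}), so every slice is linear. You instead show directly, via continuity of your three-point defect $D$, that nonlinearity of a slice is an open condition, and both versions rest on the same fact that a continuous function linear along every segment of a domain is linear; localizing to a single compact segment also handles cleanly the disconnected, $\mathbf{a}$-dependent slice domains that the paper's pointwise-limit step passes over.
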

\begin{proof}
Assume the contrary. Then, for every unit vector $\mathbf{v} \in \mathbb{R}^d$ and any point $\mathbf{a} \in \operatorname{Span}(\mathbf{v})^\perp$, there exists a sequence $\{\mathbf{a}_i\}_{i=1}^\infty$ in $\mathbb{R}^d$ converging to $ \mathbf{a}$ such that $f_{\mathbf{a}_i}$ is linear for every $i\geq 1$. By the continuity of $f$, the sequence $f_{\mathbf{a}_i}$ converges to $f_{\mathbf{a}}$ pointwise. By Lemma \ref{lemmaB.2}, the pointwise limit $f_{\mathbf{a}}$ must also be a linear function. 

This forces both $f$ and $-f$ to be convex, which then implies that $f$ itself is linear, contradicting our initial assumption. The proof is therefore completed.
\end{proof}

\begin{theorem}[Fubini-type inequality]\label{Fubini-type inequality}
Let $0\leq\alpha\leq n$ and $0\leq\beta\leq m$, where $n, m \in \mathbb{Z}$ are positive integers. Let $E$ be a subset of $\mathbb{R}^n \times \mathbb{R}^m$. If there exists a subset $A$ of $\mathbb{R}^n$ such that $\dim_{\mathcal{H}}(A) \geq \alpha$ and that $$\dim_{\mathcal{H}}(\{\mathbf{y} \in \mathbb{R}^m : (\mathbf{a}, \mathbf{y}) \in E\}) \geq \beta$$ for all $\mathbf{a} \in A$, then $\dim_{\mathcal{H}}(E) \geq \alpha+\beta$.
\end{theorem}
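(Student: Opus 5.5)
The plan is to reduce the statement to the classical Federer--Marstrand slicing inequality (Federer, \emph{Geometric Measure Theory}, 2.10.25). For an arbitrary set $E$ in a metric space and a Lipschitz map $\pi$, it asserts that
$$
\int^{*} \mathcal{H}^{t}\bigl(E\cap \pi^{-1}(\mathbf{a})\bigr)\, d\mathcal{H}^{s}(\mathbf{a}) \;\leq\; C(s,t)\,\mathrm{Lip}(\pi)^{s}\, \mathcal{H}^{s+t}(E),
$$
where $\int^*$ is the upper integral, so no measurability of $E$ or of the slice function is needed. I would apply it with $\pi\colon\mathbb{R}^n\times\mathbb{R}^m\to\mathbb{R}^n$ the coordinate projection, which is $1$-Lipschitz. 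The slice $E\cap\pi^{-1}(\mathbf{a})$ is isometric to $\{\mathbf{y}:(\mathbf{a},\mathbf{y})\in E\}$.

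First I dispose of the degenerate cases.
\begin{itemize}
\item If $\beta=0$, the hypothesis forces every fibre over $A$ to be nonempty, so $\pi(E)\supseteq A$. Since $\pi$ is Lipschitz, $\dim_{\mathcal{H}}(E)\geq\dim_{\mathcal{H}}(\pi(E))\geq\alpha$.
\item If $\alpha=0$ and $A\neq\varnothing$, then $E$ contains an isometric copy of a single fibre, so $\dim_{\mathcal{H}}(E)\geq\beta$. (If $A=\varnothing$, the hypothesis $\dim_{\mathcal{H}}(A)\ge 0$ is read as requiring $A$ nonempty.)
\end{itemize}

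In the main case $\alpha,\beta>0$, I fix $0<\alpha'<\alpha$ and $0<\beta'<\beta$. By the definition of Hausdorff dimension, $\mathcal{H}^{\alpha'}(A)=\infty$, and $\mathcal{H}^{\beta'}$ of each fibre over $\mathbf{a}\in A$ is infinite. The upper integral of a function equal to $+\infty$ on a set of positive outer $\mathcal{H}^{\alpha'}$-measure is $+\infty$. This holds because the upper integral dominates $M\cdot\mathcal{H}^{\alpha'}(A)$ for every $M$, and the upper integral is monotone with respect to outer measure. Federer's inequality therefore forces $\mathcal{H}^{\alpha'+\beta'}(E)=\infty$, hence $\dim_{\mathcal{H}}(E)\geq\alpha'+\beta'$. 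Letting $\alpha'\uparrow\alpha$ and $\beta'\uparrow\beta$ gives the claim.

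The main obstacle is the slicing inequality itself, if one wants the proof to be self-contained rather than quoted. The naive covering argument fails: cover $E$ by sets $S_i$ of diameter $<\delta$ and bound $\mathcal{H}^{\beta'}_\delta(E_{\mathbf{a}})\le\sum_i \operatorname{diam}(S_i)^{\beta'}\mathbf{1}_{\pi(S_i)}(\mathbf{a})$. The trouble comes when one integrates in $\mathbf{a}$: $\mathcal{H}^{\alpha'}(\pi(S_i))$ need not be $\lesssim\operatorname{diam}(S_i)^{\alpha'}$.

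I would follow Federer's fix. Replace $\mathcal{H}^{\alpha'}$ on the base by the weighted covering, or size-$\delta$ approximating, measures $\mathcal{H}^{\alpha'}_{\delta}$, for which a set of diameter $d\le\delta$ has measure $\le d^{\alpha'}$. Integrate the covering bound against these using the upper integral. Then pass $\delta\to0$ via a Fatou-type lemma for upper integrals, using that $\mathcal{H}^{\beta'}_\delta(E_{\mathbf{a}})\uparrow\mathcal{H}^{\beta'}(E_{\mathbf{a}})$ monotonically in $\delta$. The careful bookkeeping in this limiting step, done without any measurability of $E$, is where I expect the real work to lie.
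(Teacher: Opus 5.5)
Your proof is correct and takes essentially the paper's approach. The paper's proof just cites the classical product/slicing inequality from Falconer's book. You cite the same inequality in Federer's form (2.10.25) and derive the dimension bound from it: $\mathcal{H}^{\alpha'}(A)=\infty$ and infinite slice measures force $\mathcal{H}^{\alpha'+\beta'}(E)=\infty$. Your handling of the degenerate cases is a useful addition. Under the convention $\dim_{\mathcal{H}}\varnothing=0$ the statement as written fails when $A=\varnothing$, or when $\beta=0$ and the fibres are empty. So it must be read with $\dim_{\mathcal{H}}\varnothing=-\infty$, exactly as you do.
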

\begin{proof}
This is a classical result in geometric measure theory regarding the Hausdorff dimension of product spaces, proved, for example, by Falconer in \cite{Falconer1985}.
\end{proof}

\begin{corollary}\label{theoremA}
Let $\Omega$ be a domain in $\mathbb{R}^d$, and let $f\colon \Omega \rightarrow \mathbb{R}$ be a nonlinear differentiable function. Then the Hausdorff dimension of
$$
\bigcup_{\mathbf{a} \in \Omega}\{(\mathbf{x}, y) \in \Omega \times \mathbb{R} \mid y=f(\mathbf{a})+\langle\nabla f(\mathbf{a}), \mathbf{x}-\mathbf{a}\rangle\}
$$
is precisely $d+1$.
\end{corollary}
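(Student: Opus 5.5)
We prove the upper bound trivially and obtain the lower bound by slicing. Write $H$ for the union of tangent hyperplanes in the statement. Since $H \subseteq \mathbb{R}^{d+1}$, we have $\dim_{\mathcal{H}}(H)\leq d+1$, so only the lower bound $\dim_\mathcal{H}(H)\geq d+1$ needs proof. The plan is to reduce to the one-variable result, Corollary \ref{differentiable3}, along a family of parallel lines supplied by Lemma \ref{lemmaB.4}, and then assemble the slices with the Fubini-type inequality, Theorem \ref{Fubini-type inequality}.

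First, since $f$ is differentiable it is continuous. Lemma \ref{lemmaB.4} therefore gives a unit vector $\mathbf{v}$ and an open ball $B\subseteq \operatorname{Span}(\mathbf{v})^\perp$ such that each slice $f_{\mathbf{a}}(t)=f(\mathbf{a}+t\mathbf{v})$, $\mathbf{a}\in B$, is nonlinear. We should shrink $B$ so that the slices are defined on a nondegenerate interval. Concretely, pick $\mathbf{a}_0\in B$ and $t_0,t_1$ with $f_{\mathbf{a}_0}$ nonlinear on a small interval around $[t_0,t_1]$. By the argument of Lemma \ref{lemmaB.2} (linear limits of linear functions) and openness of $\Omega$, we can take a smaller ball $B'$ and a fixed compact interval $J$ with $\mathbf{a}+J\mathbf{v}\subseteq\Omega$ and $f_{\mathbf{a}}|_J$ nonlinear for all $\mathbf{a}\in B'$. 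This uniformity step is where I expect the main friction. Lemma \ref{lemmaB.4} as stated concerns the whole line, and we need nonlinearity on a common interval inside $\Omega$. I would get it by rerunning the contradiction argument of Lemma \ref{lemmaB.4} locally near a point supplied by Lemma \ref{lemmaB.3}.

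Next, fix $\mathbf{a}\in B'$ and consider the $2$-plane $P_{\mathbf{a}}=\{(\mathbf{a}+t\mathbf{v},y): t,y\in\mathbb{R}\}$. Points of $\mathbf{a}+J\mathbf{v}$ lie in $\Omega$, so $H$ contains, for each $s\in J$, the tangent hyperplane at $\mathbf{a}+s\mathbf{v}$. Its intersection with $P_{\mathbf{a}}$ is the line
$$y=f_{\mathbf{a}}(s)+\langle\nabla f(\mathbf{a}+s\mathbf{v}),\mathbf{v}\rangle (t-s)=f_{\mathbf{a}}(s)+f_{\mathbf{a}}'(s)(t-s),$$
which is exactly the tangent line to the graph of $f_{\mathbf{a}}$ at $s$. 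Rescaling $J$ affinely to $[0,1]$, Corollary \ref{differentiable3} shows that $H\cap P_{\mathbf{a}}$ contains an open disk in the plane, and hence has Hausdorff dimension $2$.

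Finally, choose coordinates in which $\mathbf{v}$ is the last spatial direction, and write $\mathbb{R}^{d+1}=\operatorname{Span}(\mathbf{v})^\perp\times\mathbb{R}^2$. The fibre of $H$ over each $\mathbf{a}\in B'$ is $H\cap P_{\mathbf{a}}$, which has dimension $2$, while $\dim_{\mathcal{H}}(B')=d-1$. Theorem \ref{Fubini-type inequality} with $n=d-1$, $m=2$, $\alpha=d-1$, $\beta=2$ then yields $\dim_{\mathcal{H}}(H)\geq d+1$. One caveat is that Theorem \ref{Fubini-type inequality} as stated needs no measurability, so nothing further is required. Alternatively, since each fibre contains a disk, one could use a Lebesgue-measure argument if measurability of $H$ were available.
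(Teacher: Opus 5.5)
Your proposal is essentially the paper's proof. Both take nonlinear parallel slices over a $(d-1)$-dimensional ball via Lemmas \ref{lemmaB.3} and \ref{lemmaB.4}, apply the one-variable tangent-line result on each slice, use the Fubini-type inequality for the lower bound and the ambient dimension for the upper bound. Your uniformity step (a common compact $J$ with $\mathbf{a}+J\mathbf{v}\subseteq\Omega$, using that affineness on $J$ is a closed condition in $\mathbf{a}$) is more careful than the paper's appeal to a ``sufficiently small interval''. One small fix, which the paper also glosses over: the hyperplanes are cut off at $\Omega\times\mathbb{R}$, so $H\cap P_{\mathbf{a}}$ is only guaranteed to contain the tangent segments over $J$, not full tangent lines, and you should cite Lemma \ref{differentiable2} (whose rectangle $[1-\delta,1]\times[b-\varepsilon,b+\varepsilon]$ lies over the domain and uses only tangent lines evaluated there) instead of Corollary \ref{differentiable3}.
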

\begin{proof}
By Lemma \ref{lemmaB.3}, there exists a point $\mathbf{b} \in \Omega$ such that $\left.f\right|_U$ is not a linear function for any neighborhood $U$ of $\mathbf{b}$ in $\Omega$. By Lemma \ref{lemmaB.4}, there exist a unit vector $\mathbf{v}$ and an open ball $B$ in the orthogonal complement $\operatorname{Span}(\mathbf{v})^\perp$ such that the function $$f_{\mathbf{a}}\colon I \to \mathbb{R}, \quad t\mapsto f(\mathbf{a} + t\mathbf{v})$$ is not linear for all $\mathbf{a} \in B$ and some sufficiently small interval $I$. 

For any fixed $\mathbf{a} \in B$, because $f$ is differentiable, the restriction $f_{\mathbf{a}}$ is a nonlinear differentiable function of one variable. By Corollary \ref{differentiable3}, the union of the tangent lines to the graph of $f_{\mathbf{a}}$ is a set of Hausdorff dimension exactly $\beta=2$.

Notice further that $B$ is an open ball with Hausdorff dimension $\alpha = d-1$. The Fubini-type inequality in Theorem \ref{Fubini-type inequality} immediately yields that the Hausdorff dimension of 
$$
E:=\bigcup_{\mathbf{a} \in \Omega}\{(\mathbf{x}, y) \in \Omega \times \mathbb{R} \mid y=f(\mathbf{a})+\langle\nabla f(\mathbf{a}), \mathbf{x}-\mathbf{a}\rangle\}
$$
is at least $\alpha+\beta=(d-1)+2=d+1$.
However, because $E$ is a subset of the ambient space $\mathbb{R}^{d+1}$, its dimension cannot exceed $d+1$. We therefore conclude that $\dim_\mathcal{H}(E)$ is exactly $d+1$.
\end{proof}

\section*{Acknowledgement}

The author is deeply grateful to Tam\'as Keleti for drawing the author's attention to this fascinating topic and for inspiring this line of inquiry. Also, the author would like to express sincere gratitude to Weiyi Zhang for very useful and inspiring discussions.

Furthermore, the author is also grateful to
Tam\'as Keleti for pointing out a mathematical inaccuracy that occurred in Section 4, and for suggesting an idea on how to fix that.

This research was completed while the author was studying at the Mathematics Institute of the University of Warwick. The author would like to thank the University of Warwick for its hospitality.



\section*{Statements and Declarations}

No funding was received to assist with the preparation of this manuscript, and the author did not receive support from any organization for the submitted work.

The author certifies that they have no affiliations with or involvement in any other organization or entity with any financial interest or non-financial interest in the subject matter or materials discussed in this manuscript.

This article is licensed under a Creative Commons Attribution 4.0 International License, which permits use, sharing, adaptation, distribution and reproduction in
any medium or format, as long as you give appropriate credit to the original author and
the source, provide a link to the Creative Commons licence, and indicate if changes were
made. The images or other third party material in this article are included in the article’s
Creative Commons licence, unless indicated otherwise in a credit line to the material. If
material is not included in the article’s Creative Commons licence and your intended use is
not permitted by statutory regulation or exceeds the permitted use, you will need to obtain
permission directly from the copyright holder.

Data sharing is not applicable to this article as no datasets were generated or analysed during the current study.

The author hereby provides consent for the publication of the manuscript detailed above.

\bibliographystyle{plain}
\bibliography{references}

@article{Davies1952,
  author    = "Davies, R. O.",
  title     = "On accessibility of plane sets and differentiation of functions of two variables",
  journal   = "Mathematical Proceedings of the Cambridge Philosophical Society",
  volume    = "48",
  pages     = "215--232",
  year      = "1952"
}

@article{Davies1971,
  author    = "Davies, R. O.",
  title     = "Some remarks on the {K}akeya problem",
  journal   = "Mathematical Proceedings of the Cambridge Philosophical Society",
  volume    = "69",
  pages     = "417--421",
  year      = "1971"
}

@article{KatzTao2002,
  author    = "Katz, N. and Tao, T.",
  title     = "New bounds for {K}akeya problems",
  journal   = "Journal d'Analyse Math{\'e}matique",
  volume    = "87",
  pages     = "231--263",
  year      = "2002"
}

@article{KatzZahl2019,
  author    = "Katz, N. and Zahl, J.",
  title     = "An improved bound on the {H}ausdorff dimension of {B}esicovitch sets in {R}3",
  journal   = "Journal of the American Mathematical Society",
  volume    = "32",
  pages     = "195--259",
  year      = "2019"
}

@article{Venieri2017,
  author    = "Venieri, L.",
  title     = "Dimension estimates for {K}akeya sets defined in an axiomatic setting",
  journal   = "Annales Academi{\ae} Scientiarum Fennic{\ae} Mathematica Dissertationes",
  volume    = "161",
  year      = "2017"
}

@article{Cumberbatch2023,
  author    = "Cumberbatch, James and Keleti, Tam{\'a}s and Zhang, Jialin",
  title     = "Hausdorff dimension of union of lines that cover a curve",
  journal   = "Pure and Applied Functional Analysis",
  volume    = "8",
  pages     = "1651--1659",
  year      = "2023"
}

@article{Falconer2016,
  author    = "Falconer, K. and Mattila, P.",
  title     = "Strong {M}arstrand theorems and dimensions of sets formed by subsets of hyperplanes",
  journal   = "Journal of Fractal Geometry",
  volume    = "3",
  pages     = "319--326",
  year      = "2016"
}

@book{Kechris1995,
  author    = "Alexander S. Kechris",
  title     = "Classical descriptive set theory",
  publisher = "Springer New York, NY",
  year      = "1995"
}

@book{Falconer1985,
  author    = "K. Falconer",
  title     = "The Geometry of Fractal Sets",
  publisher = "Cambridge University Press",
  year      = "1985"
}

@incollection{Wolff1999,
  author    = "Wolff, T.",
  title     = "Recent work connected with the {K}akeya problem",
  booktitle = "Prospects in mathematics ({P}rinceton, {NJ}, 1996)",
  publisher = "Amer. Math. Soc.",
  address   = "Providence, {RI}",
  pages     = "129--162",
  year      = "1999"
}

\end{document}